\newtheorem{theorem}{Theorem}[section]
\newtheorem{proposition}[theorem]{Proposition}
\newtheorem{corollary}[theorem]{Corollary}
\newtheorem{lemma}[theorem]{Lemma}
\newtheorem{remark}[theorem]{Remark}
\newcommand{\C}{\mathbb{C}}
\newcommand{\N}{\mathbb{N}}
\newcommand{\U}{\mathfrak A}
\newcommand{\B}{\mathcal{L}}
\newcommand{\E}{\mathcal{E}}
\newcommand{\I}{\mathcal{I}}
\newcommand{\M}{\mathcal{M}}
\newcommand{\s}{\mathcal{S}}
\newcommand{\fin}{\overline{\mathfrak{F}}}
\newcommand{\ep}{\ell_{p}}
\newcommand{\eq}{\ell_{q}}
\newcommand{\en}{\boldsymbol{\ell_{n}}}
\newcommand{\ei}{\ell_{\infty}}
\newcommand{\iso}{\overset 1 =}
\newcommand{\inc}{\overset {\leq1} \hookrightarrow}
\DeclareMathOperator{\spanned}{span}
\begin{document}

\title{Diagonal multilinear operators on K\"{o}the sequence spaces}

\author{Ver\'onica Dimant
\and
Rom\'an Villafa\~ne}

\thanks{Both authors were partially supported by CONICET PIP 2014-0483 and ANPCyT PICT 2015-2299. The second author was also partially supported by UBACyT 20020130100474 BA}

\date{}

\subjclass[2010]{46A45, 47L22,47H60}
\keywords{Multilinear ideals, K\"{o}the sequence spaces, Diagonal multilinear operators}

\address{Departamento de Matem\'{a}tica y Ciencias, Universidad de San
Andr\'{e}s, Vito Dumas 284, (B1644BID) Victoria, Buenos Aires,
Argentina and CONICET} \email{vero@udesa.edu.ar}
\address{Departamento de Matem\'{a}tica - Pab I,
Facultad de Cs. Exactas y Naturales, Universidad de Buenos Aires,
(C1428EGA) Buenos Aires, Argentina and IMAS -  CONICET.}
\email{rvillafa@dm.uba.ar}

\begin{abstract}
We analyze the interplay between maximal/minimal/adjoint ideals of multilinear operators (between  sequence spaces) and their associated K\"{o}the sequence spaces. We establish relationships with  spaces of multipliers and apply these results to describe diagonal multilinear operators from Lorentz sequence spaces. We also define and study some properties of the ideal of $(E,p)$-summing multilinear mappings, a natural extension of the linear ideal of absolutely $(E,p)$-summing operators.
\end{abstract}

\maketitle

\section*{Introduction}

Trying to describe the connections between different ideals of linear operators (and the internal structure of them),  it began, in the 70's, the study of diagonal linear operators on $\ell_p$ spaces with the work of Carl \cite{Carl76}, K\"{o}nig \cite{Kon75} and Pietsch \cite{Pie80}. By means of limit orders they have compared  different ideals of linear operators and described their diagonal elements. Next, this research  continued in the context of K\"{o}the sequence spaces, leading to the so called \textit{multipliers}. This notion has its root in harmonic analysis where it has appeared within the study of Fourier series and Fourier transformation. Later, it has also been employed in many other contexts as, closer to our framework, Banach function spaces and Banach sequence spaces \cite{DMM06,BG87,AF01,Ka07,KLM13,KLM14}.

The concept of ideal of multilinear operators was also introduced by
Pietsch in \cite{Pie84} and it has been developed by several authors since then. Even if the multilinear theory has its roots in the linear one, it had  its own development  that led to
different situations involving new interesting techniques. Some usual linear ideals (absolutely $p$-summing operators, for instance) have many natural diverse extensions to the multilinear setting which enrich the theory by showing interesting situations that do not appear in the linear context (see, for instance \cite{PG05,CP07,PS11}). We refer to \cite{FG03,FH02} for general results
about ideals of multilinear mappings. For a presentation of the multilinear theory focused in the interplay with polynomials and holomorphic mappings the books \cite{Din99} and \cite{Muj86} are the classical references.

The introduction of limit orders for studying ideals of multilinear forms appeared in \cite{CDS06} and similar methods were used for general sequence spaces in \cite{CDS09}. There, it was defined the K\"{o}the sequence space associated to an ideal of multilinear forms acting on K\"{o}the sequence spaces. Later, in \cite{CDSV14}, this kind of study reached vector valued multilinear ideals between $\ell_p$-spaces.

Here, we propose a more general approach of the relationship between ideals of multilinear operators (acting on K\"{o}the sequence spaces) and their respective associated sequence spaces.  In particular, we analyze if for a maximal (minimal) ideal its associated sequence space is also maximal (minimal). In addition, we relate the sequence spaces associated to an ideal and its adjoint.

The spaces of multipliers appear to give us new descriptions of our sequence spaces associated to multilinear ideals. As an application, we can characterize diagonal multilinear operators from Lorentz sequence spaces.

In the final section, we define  the ideal of $(E,p)$-summing multilinear mappings, as a natural extension of the linear ideal of absolutely $(E,p)$-summing operators. We obtain some properties of this multilinear ideal by means of our previous results on associated sequence spaces.

\section{Preliminaries}\label{preliminares}

Throughout the paper we will use standard notation of the Banach
space theory. We will consider complex Banach spaces $E,F, \ldots$
and their duals will be denoted by $E', F',\ldots$. We will write $E=F$ if they are topologically isomorphic and $E\overset{1}{=}F$ if they are isometrically isomorphic. The symbol
$\overset 1 \hookrightarrow $ means an isometric injection and the symbol
$\inc$ means a norm one inclusion (not necessarily isometric).

Sequences of complex numbers will be denoted by $x =
(x(k))_{k=1}^{\infty}$, where each $x(k) \in \mathbb{C}$.
 By a \emph{K\"{o}the sequence space} (also known as Banach sequence space) we mean a Banach space $E \subseteq
\mathbb{C}^{\mathbb{N}}$ such that
$\ell_{1} \inc E \inc \ell_{\infty}$ and with the \emph{normal property}: if
$x \in \mathbb{C}^{\mathbb{N}}$ and $y \in E$ satisfy $\vert
x(k) \vert \leq \vert y(k) \vert$ for all $k \in \mathbb{N}$ then
$x \in E$ and $\Vert x \Vert \leq \Vert y \Vert$. Note that in a K\"{o}the sequence space $E$, given $x\in E$ and a sequence of complex numbers $s$ with $|s(k)|=1$ for all $k\in\N$, we should have $s\cdot x\in E$ and $\|s\cdot x\|=\|x\|$ (where the product is coordinatewise). For each $N \in \mathbb{N}$, we consider the $N$-dimensional
truncation $E_{N}:= \spanned \{e_{1}, \dots, e_{N}\}$
(where $e_n$ denotes the $n$-th canonical unit vector: $e_n(k)=\delta_{n,k}$ for all $k$).
 The canonical inclusion and projection will be denoted by
$i_{N} : E_{N} \overset 1 \hookrightarrow E$ and  $\pi_{N} : E
\twoheadrightarrow E_{N}$.

The K\"othe dual of a K\"{o}the sequence space $E$, defined as
\[
E^{\times} : = \left\{ z \in \mathbb{C}^{\mathbb{N}}  \colon \sum_{j \in
\mathbb{N}} |z(j)\cdot x(j)| < \infty \mbox{ for all } x \in E \right\},
\]
is a K\"{o}the sequence space with the norm
\[\|z\|_{E^{\times}} :=
\sup_{\| x \|_{E} \leq 1} \sum_{j \in \mathbb{N}} |z(j) \cdot x(j)|=\sup_{\| x \|_{E} \leq 1}\|z\cdot x\|_{\ell_1}.\]
It is well known (see, for example,~\cite[Lemma 2.8]{BeSha88}) that $z \in
E^{\times}$ if and only if the series $\sum_{j \in \mathbb{N}} z(j)\cdot x(j)$ converges for all $x \in E$. Also, $\|z\|_{E^{\times}} = \sup_{\|x\|_{E}\leq 1} \left|\sum_{j \in \mathbb{N}} z(j) \cdot x(j)\right|.$
Note that
$(E_{N})' \overset{1}{=} (E^{\times})_{N}$ holds for every $N$. In the same way that we define the K\"othe dual, we can considerate $(E^\times)^\times=E^{\times\times}$ and we say that $E$ is \emph{Köthe reflexive} if $E^{\times \times}\overset 1 = E$.

Following \cite[1.d]{LTII79}, a K\"{o}the sequence space $E$
is said to be \emph{$r$-convex} (with $1 \leq r < \infty$) if there
exists a constant $\kappa > 0$ such that for any choice $x_{1},
\dots, x_{m} \in E$ we have
\[
\bigg\| \bigg( \Big( \sum_{j=1}^{m} | x_{j}(k)|^{r} \Big)^{1/r} \bigg)_{k=1}^{\infty} \bigg\|_{E}
\leq \kappa \ \bigg( \sum_{j=1}^{m} \| x_{j}\|_{E}^{r} \bigg)^{1/r}.
\]
We denote by $\mathbf{M}^{(r)} (E)$ the smallest
constant which satisfies the inequality.

The \emph{minimal kernel} of a K\"{o}the sequence space $E$ is defined as the set
 \[ E^{min}:=\big\{x\in\ei \ : \ x=y\cdot z \ \  \textrm{ with } \ y\in E \ \ \textrm{and} \ \  z\in c_0\big\}\]
  which is also a K\"{o}the sequence space if we endow it with the norm
  \[ \|x\|_{E^{min}}=\inf\big\{\|y\|_E\cdot\|z\|_{\ei} \  : \ x=y\cdot z \ \  \textrm{ with } \ y\in E \ \ \textrm{and} \ \  z\in c_0 \big\}.\]
The \emph{maximal hull} of a K\"{o}the sequence space $E$ is defined as the set
  \[E^{max}:=\big\{x\in\ei \  :\  x\cdot z\in E \ \textrm{ for all }\  z\in c_0 \big\},\]
 which results a K\"{o}the sequence space if the norm is given by
  \[\|x\|_{E^{max}}=\sup_{z\in B_{c_0}} \|x\cdot z\|_E.\]
A K\"{o}the sequence space $E$ is said to be \emph{maximal} if $E\overset 1 = E^{max}$ and \emph{minimal} if $E\overset 1 = E^{min}$. For example, a K\"{o}the dual $E^\times$ is always maximal.

For a detailed study and general facts about K\"{o}the sequence spaces,
see \cite{LTI77, LTII79}.

\medskip

The space of continuous linear operators between two Banach
spaces $E$ and $F$ will be denoted by $\mathcal{L} (E;F)$ and the space of
continuous $n$-linear mappings from $E_{1} \times \cdots \times E_{n}$ to
$F$ by $\mathcal{L} (E_{1}, \ldots, E_{n};F)$. This is a Banach space  with the usual sup
norm, given by
$
\Vert T \Vert := \sup \left\{ \Vert T(x_{1} ,  \ldots , x_{n}) \Vert_{F} \colon \Vert x_{i} \Vert_{E_{i}} \leq 1 \, , \, i=1, \ldots n\right\}
$. If $E_{1}=\cdots = E_{n}=E$ we will write $\mathcal{L} (^{n} E;F)$ and
whenever $F=\mathbb{C}$ we will simply write $\mathcal{L}(E_{1}, \dots , E_{n})$ or $\mathcal{L} (^{n} E)$.

\smallskip

Ideals of multilinear forms and multilinear operators were introduced by Pietsch in \cite{Pie84}.\ Let us
recall the definition.\ An ideal of multilinear operators $\mathfrak{A}$
is a  subclass of $\mathcal{L}$, the class continuous multilinear operators,
such that, for any  Banach spaces $E_{1}, \dots , E_{n}$ and $F$ the set
\[
\mathfrak{A}(E_{1}, \dots , E_{n};F)=\mathfrak{A} \cap
\mathcal{L}(E_{1}, \dots , E_{n};F)
\]
satisfies
\begin{enumerate}
\item If $S,T \in \mathfrak{A}(E_{1}, \dots , E_{n};F)$, then $S+T \in
\mathfrak{A}(E_{1}, \dots , E_{n};F)$.
\item If $T \in \mathfrak{A}(E_{1}, \dots , E_{n};F)$ and $B_{i} \in
\mathcal{L}(G_{i},E_{i})$ for $i=1, \dots ,n$ and $A\in\B(F;H)$, then $A\circ T \circ
(B_{1},\dots B_{n}) \in \mathfrak{A}(G_{1}, \dots , G_{n};H)$.
\item  The mapping $(x_{1},\dots,x_{n}) \mapsto \gamma_{1}(x_{1})
\cdots \gamma_{n}(x_{n})\cdot f$ belongs to $\mathfrak{A}(E_{1}, \dots ,
E_{n};F)$ for any $\gamma_{1} \in E'_{1}, \dots , \gamma_{n} \in
E'_{n}$ and $f\in F$.
\end{enumerate}
An ideal of multilinear operators is called \emph{normed} if for each
$E_{1},\dots , E_{n}$ and $F$ there is a norm $\| \cdot \|_{\mathfrak{A}(E_{1}, \dots , E_{n};F)}$ in $\mathfrak{A}(E_{1},
\dots , E_{n};F)$ such that
\begin{enumerate}
\item $\|(x_{1},\dots,x_{n}) \mapsto \gamma_{1}(x_{1}) \cdots
\gamma_{n}(x_{n})\cdot f\|_{\mathfrak{A}(E_{1}, \dots , E_{n};F)} =
\|\gamma_{1}\| \cdots \|\gamma_{n}\|\cdot \|f\|$.
\item $\|A\circ T \circ (B_{1},\dots, B_{n}) \|_{\mathfrak{A}(G_{1}, \dots , G_{n};H)} \leq
\|A\| \cdot\|T\|_{\mathfrak{A}(E_{1}, \dots , E_{n};F)} \cdot \|B_{1}\| \cdots
\|B_{n}\|$.
\end{enumerate}
If $\mathfrak{A}(E_{1},\dots , E_{n};F)$ is complete for every Banach spaces $E_{1},\dots , E_{n},F$ we say that
$\mathfrak{A}$ is a \emph{Banach ideal of multilinear operators} (or just, a Banach multilinear ideal).

\emph{The minimal kernel of $\U$} is defined as the composition ideal $\U^{min} := \overline{\mathfrak{F}}\circ\U \circ (\overline{\mathfrak{F}},\dots,\overline{\mathfrak{F}})$, where $\overline{\mathfrak{F}}$ stands for the ideal of approximable operators (i. e. the closure of the ideal of finite rank linear operators).
In other words, a multilinear operator $T$ belongs to $\U^{min}(E_1,\dots,E_n;F)$ if it admits a factorization
\begin{equation} \label{factorminimal}
\xymatrix{ E_1\times\dots\times E_n  \ar[rr]^{\;\;\;\;\;\;\;\;T} \ar[d]_{(B_1,\dots,B_n)} & & F \\
 X_1\times\dots\times X_n\ar[rr]^{\;\;\;\;\;\;\;\;S}  & & Y \ar[u]_A }
\end{equation}
where
$A,B_1,\dots,B_n\in \overline{\mathfrak{F}}$ and $S\in\U(X_1,\dots,X_n;Y)$.
The $\U^{min}$-norm of $T$ is given by $\|T\|_{\U^{min}} := \inf \{\|A\|\cdot  \|S\|_{\U}\cdot \|B_1\|\cdots\|B_n\| \}$, where the infimum runs over all possible factorizations as in~\eqref{factorminimal}.
$\U^{min}$ is the smallest Banach multilinear ideal whose norm coincides with
$\|\cdot\|_{\mathfrak{A}}$ over finite dimensional spaces. This and other properties of $\U^{min}$ can be found in {\cite{Flo01}}. An ideal of multilinear operators is said to be \emph{minimal} if $(\mathfrak{A},\|\cdot\|_{\mathfrak{A}})\iso(\mathfrak{A}^{min},\|\cdot\|_{\mathfrak{A}^{min}})$.

If $\mathfrak{A}$ is a normed ideal of $n$-linear operators, the
\emph{maximal hull} $\mathfrak{A}^{max}$ of $\mathfrak{A}$ is defined as
the class of all $n$-linear operators $T$ such that
\begin{center}
  $\|T\|_{\U^{max}}:=\sup\left\{\left\|Q_L^Y\circ T\circ(I_{M_1}^{X_1},\dots,I_{M_n}^{X_n})\right\|_{\U} : M_i\in FIN(X_i), L\in COFIN(Y)\right\}$
\end{center}
is finite, where $I_M^X:M\rightarrow X$ is the inclusion from $M$ into $X$, $Q_L^Y:Y\rightarrow Y/L$ is the projection of $Y$ over $Y/L$ and $FIN(X)$ ($COFIN(X)$)  represents the class of subspaces of $X$ of finite dimension (codimension).
$\mathfrak{A}^{max}$ is always complete and it
is the largest ideal whose norm coincides with
$\|\cdot\|_{\mathfrak{A}}$ over finite dimensional spaces. A normed
ideal  $\mathfrak{A}$ is called maximal if
$(\mathfrak{A},\|\cdot\|_{\mathfrak{A}})\iso(\mathfrak{A}^{max},\|\cdot\|_{\mathfrak{A}^{max}})$.

If  $\U$ is an ideal of multilinear operators, its \emph{associated tensor norm} is the unique finitely generated tensor norm $\alpha$, of order $n+1$, satisfying $$\U(M_1,\dots, M_n;N) \overset 1 =  (M_1' \otimes \dots \otimes M_n' \otimes N;{\alpha})$$ for every finite dimensional spaces $M_1,\dots,M_n,N$.  In that case we write $\U\sim\alpha$. A detailed study of tensor norms and their relationship with linear/multilinear ideals
 can be found in \cite{DF93, Flo01, Flo02, FG03, FH02}.
Note that $\U$, $\U^{max}$ and $\U^{min}$ have the same associated tensor norm since they coincide isometrically on finite dimensional spaces.

Given a normed ideal $\mathfrak{A}$ associated to a finitely
generated tensor norm $\alpha$, its adjoint ideal $\mathfrak{A}^*$
is defined by
\[\mathfrak{A}^*(E_1,\dots,E_n;F):=\big(E_1\otimes\cdots\otimes E_n\otimes F', \alpha \big)'\cap\B(E_1,\dots,E_n;F).\]
The adjoint ideal is called dual ideal in \cite{Flo01}. The tensor norm associated to $\mathfrak{A}^*$ is denoted by
$\alpha'$. It is well known, by the representation theorem for maximal ideals \cite[Section 4.5]{FH02}, that $\U^*$ is always maximal and $\U^{**}\iso\U^{max}$.

Recall that a  multilinear operator $T\in \B(E_1,\dots,E_n;F)$ is \emph{(Grothendieck) integral} if there exists a regular $F''$-valued Borel
measure $\mu$, of bounded variation on $(B_{E_1'}\times\dots\times B_{E_n'}, w^*)$  such that
$$ T(x_1,\dots,x_n) =\int_{B_{E_1'}\times\dots\times B_{E_n'}} (x_1'(x_1))\cdots (x_n'(x_n)) \ d\mu(x_1',\dots,x_n')$$ for every $x_k\in E_k.$
The space of Grothendieck integral $n$-linear operators is
denoted by $\I(E_1,\dots,E_n;F)$ and the integral norm of a multilinear operator $T$ is defined as $\inf\{ \|\mu\| \}$, where the infimum runs over all the measures $\mu$ representing $T$. This ideal is maximal and its adjoint is the ideal $\B$ of all continuous multilinear mappings.


\section{Interplay between an ideal and its associated sequence space}

 For K\"{o}the sequence spaces $E$ and $F$, an $n$-linear operator $T \in\mathcal{L}(^{n} E; F)$ is said to be \emph{diagonal} if there exists a bounded sequence $\alpha=(\alpha(k))_{k}\in\mathbb C^{\mathbb N}$ such that for all $x_{1}, \dots,x_{n} \in E$ we can write
\begin{equation*}
T(x_{1}, \dots,x_{n}) =\alpha\cdot x_1\cdots x_n= \sum_{k\in\N} \alpha(k)\cdot x_{1}(k) \cdots x_{n}(k)\cdot e_k.
\end{equation*}
In this case, we say that $T$ is the \emph{diagonal multilinear operator associated with} $\alpha$ and we denote it $T_\alpha$.
Given $\U$, an ideal of multilinear operators, we define the \emph{sequence space associated with} $\U$ as
\[\en (\U;E,F):=\big\{\alpha\in \ei : T_\alpha\in\U(^n E; F)\big\}.\]
This is a K\"othe sequence space endowed with the norm $\|\alpha\|_{\en(\U;E,F)}=\|T_\alpha\|_{\U(^nE;F)}$. When $F=\C$ we simply write $\en(\U;E)$.

The following finite-dimensional identifications are easy to check. They
will enable us to prove a duality result next.
\begin{equation}\label{suc finitas}
\boldsymbol{\ell}_n(\U;E,F)_N\overset 1 =\boldsymbol{\ell_n}(\U;E_N,F_N)\overset 1 =\boldsymbol{\ell_n}(\U;E_N^{\times\times},F_N^{\times\times})\overset 1 = \boldsymbol{\ell_n}(\U;E^{\times\times},F^{\times\times})_N.
\end{equation}
\begin{equation}\label{suc finitas max min}
\boldsymbol{\ell_n}(\U;E_N,F_N)\overset 1 =\boldsymbol{\ell_n}(\U^{max};E_N,F_N)\overset 1 =\boldsymbol{\ell_n}(\U^{min};E_N,F_N).
\end{equation}

Our aim is to analyze first the relationship between minimal or maximal ideals with their respective associated sequence spaces and later the interplay between the sequence space associated with an ideal and its adjoint.

 In \cite[Proposition 5.5 and 5.6]{CDS09} it is proved that if $\U$ is a maximal ideal of multilinear forms (scalar valued multilinear operators), then $\en(\U;E)\iso\en(\U;E^{\times\times})$ and $\en(\U;E)\iso\en(\U^*;E^\times)^\times$. In both cases the key of the proofs is the use of \cite[Lemma 5.4]{CDS09}, which is a version of the \emph{Density Lemma} \cite[13.4]{DF93} for diagonal multilinear forms. So, we begin by proving a new version of this Lemma in our vector-valued context and then we can establish some similar results to those given above.

\begin{lemma} \label{density lemma}
Let $\U$ be a maximal ideal of $n$-linear operators and let $E$ and $F$ be K\"{o}the sequence spaces. For a sequence $\alpha$, suppose that there exists a constant $C>0$ such that the projection $\pi_N(\alpha)$ satisfies  $\|\pi_{_N}(\alpha)\|_{\en(\U;E_N,F)}\leq C$ for all $N\in\N$. Then, $\alpha\in\en(\U;E,F^{\times\times})$ and $\|\alpha\|_{\en(\U;E,F^{\times\times})}\leq C$.

In other words, if $T_{\pi_{_N}(\alpha)}\in\U(^nE_N;F)$ has $\U$-norm less than or equal to $C$ for all $N\in\N$ then
$T_\alpha\in\U(^nE;F^{\times\times})$ with $\U$-norm less than or equal to $C$.
\end{lemma}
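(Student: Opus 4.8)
The plan is to leverage the maximality of $\U$, which by the representation theorem guarantees that $\U^{**}\iso\U^{max}\iso\U$ and, more importantly, gives us the isometric description $\U(E_1,\dots,E_n;F)\overset 1 = (E_1\otimes\cdots\otimes E_n\otimes F',\alpha)'\cap\B(E_1,\dots,E_n;F)$ together with the finite-dimensional identities~\eqref{suc finitas} and~\eqref{suc finitas max min}. First I would observe that the hypothesis $\|\pi_N(\alpha)\|_{\en(\U;E_N,F)}\leq C$ combined with~\eqref{suc finitas} and~\eqref{suc finitas max min} yields $\|\pi_N(\alpha)\|_{\en(\U;E_N,F_M)}\leq C$ for all $M\geq N$ (enlarging the target costs nothing since we may postcompose with a norm-one projection $\pi_M$, and truncating $\alpha$ further only uses the ideal property), so in fact we control all the finite truncations $T_{\pi_N(\alpha)}$ as $n$-linear operators between the truncations of $E$ and $F^{\times\times}$, using that $(F^{\times\times})_N\overset 1 = F_N$.

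The heart of the argument is a weak-$*$ compactness / density step analogous to~\cite[13.4]{DF93}. I would fix $N$ and consider $T_{\pi_N(\alpha)}$ as an element of $\U(^nE_N;F^{\times\times}_N)$, then lift it to a functional on the finite-dimensional tensor product $E_N\otimes\cdots\otimes E_N\otimes (F^{\times\times}_N)'$ with $\alpha$-norm $\leq C$, via $\U(^nE_N;F^{\times\times}_N)\overset 1=(E_N'\otimes\cdots\otimes E_N'\otimes F^{\times\times}_N;\alpha)$ and duality. These functionals, viewed on the increasing union $\bigcup_N E_N\otimes\cdots\otimes E_N\otimes (F^{\times})_N$ (recall $(F^{\times\times}_N)'\overset 1=(F^\times)_N$), are uniformly bounded by $C$ in the relevant $\alpha'$-pre-dual norm and compatible with the diagonal structure, so a diagonal/weak-$*$ limit argument produces a functional $\varphi$ of norm $\leq C$ on the whole tensor product $E\otimes\cdots\otimes E\otimes F^{\times}$ (equipped with $\alpha$, finitely generated) extending all of them. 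Because $\U$ is maximal, this functional, once checked to be bounded as a multilinear operator into $F^{\times\times}=(F^\times)'$, is exactly $T_\alpha\in\U(^nE;F^{\times\times})$, and the norm bound $\|T_\alpha\|_\U\leq C$ is read off from $\|\varphi\|\leq C$ together with the maximal representation $\U^{max}=\U^{**}$.

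Concretely, the cleanest route is probably to bypass the abstract tensor machinery and argue directly on operators: for each choice of $x_1,\dots,x_n\in E$ and each $\psi\in F^{\times}=(F^{\times\times})'$ (or rather each finitely supported $\psi$), the scalar quantity $\sum_k\alpha(k)x_1(k)\cdots x_n(k)\psi(k)$ is the limit of $\langle T_{\pi_N(\alpha)}(\pi_N x_1,\dots,\pi_N x_n),\pi_N\psi\rangle$, which is bounded by $C\,\|x_1\|\cdots\|x_n\|\,\|\psi\|_{F^\times}$ uniformly in $N$; hence $T_\alpha(x_1,\dots,x_n)$ defines an element of $F^{\times\times}$ of norm $\leq C\|x_1\|\cdots\|x_n\|$, so $T_\alpha$ is a bounded $n$-linear operator into $F^{\times\times}$. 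Then to place $T_\alpha$ inside $\U$ with $\U$-norm $\leq C$, I would invoke maximality directly: for every $M_i\in FIN(E)$ and $L\in COFIN(F^{\times\times})$, the compression $Q_L\circ T_\alpha\circ(I_{M_1},\dots,I_{M_n})$ factors (up to norm-one maps) through some finite truncation $T_{\pi_N(\alpha)}$, whose $\U$-norm is $\leq C$ by hypothesis and~\eqref{suc finitas max min}; taking the supremum over all such $M_i,L$ gives $\|T_\alpha\|_{\U^{max}}\leq C$, and since $\U$ is maximal this is $\|T_\alpha\|_{\U}\leq C$.

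The main obstacle I anticipate is the bookkeeping in this last compression step: I must show that the finite-dimensional compression of $T_\alpha$ really does agree (isometrically, through norm-one maps) with a truncation $T_{\pi_N(\alpha)}$ of the diagonal operator, rather than merely being dominated by it — this requires choosing $N$ large enough that $M_1,\dots,M_n$ are contained in $E_N$ and that $L$ contains the kernel of $\pi_N$ on $F^{\times\times}$, and then checking that the normal property of the K\"othe spaces makes all the intervening inclusions and projections have norm one, so that the ideal inequality~(2) in the definition of a normed ideal does not lose any constant. The diagonal structure is what makes this work, since truncating commutes with the diagonal action; absent that, compressions of a general operator need not be compressions of a smaller operator of the same kind. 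Everything else — the uniform scalar bound, the identification $(F_N^{\times\times})'\overset 1=(F^\times)_N$, and the passage to the limit — is routine given the preliminaries.
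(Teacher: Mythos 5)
There is a genuine gap, and it sits exactly where you locate your ``main obstacle'': the claim that for $M_i\in FIN(E)$ one can choose $N$ with $M_1,\dots,M_n\subseteq E_N$ is false in general. A finite-dimensional subspace of $E$ may be spanned by vectors of infinite support (e.g.\ the line through $(1,\tfrac12,\tfrac14,\dots)$), and then it is contained in no $E_N$; so the compression $Q_L\circ T_\alpha\circ(I_{M_1},\dots,I_{M_n})$ does \emph{not} factor through any truncation $T_{\pi_N(\alpha)}$, and the supremum defining $\|T_\alpha\|_{\U^{max}}$ cannot be estimated this way. This is not bookkeeping: it is the very difficulty that the Density Lemma \cite[13.4]{DF93} is designed to overcome, and that lemma requires the subspace $E_0=\spanned\{e_j\}$ to be \emph{dense} in $E$, which fails for non-minimal $E$ (e.g.\ $E=\ell_\infty$), a case the statement must cover. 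Your more abstract variant has the same gap in disguise: the compatible functionals live only on $\bigcup_N \otimes^n E_N\otimes (F^\times)_N$, which need not be dense in $(\otimes^n E\otimes F^\times,\alpha)$, and a Hahn--Banach or weak-star extension to the whole tensor product is not unique and need not obey the diagonal formula on elementary tensors $x_1\otimes\cdots\otimes x_n\otimes z$ with infinitely supported $x_i$; hence the operator it represents need not be $T_\alpha$. (A smaller inaccuracy: a norm-$\le C$ functional on $\otimes^n E\otimes F^\times$ represents an operator into $(F^\times)'$, not into $F^{\times\times}$; one still needs the norm-one coordinate map onto $F^{\times\times}$.)

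The paper's proof avoids all of this by a different device: it composes each truncated operator with the coordinate projections \emph{on the domain}, forming $J_F\circ T_{\pi_N(\alpha)}\circ(\pi_N,\dots,\pi_N)$, which are globally defined elements of $\U(^nE;F'')\overset 1=(\otimes^n E\otimes F';\nu)'$ of norm $\le C$. Weak-star compactness of $C\cdot B_{\U(^nE;F'')}$ yields an accumulation point $\Phi$, and since for \emph{every} $x_1,\dots,x_n\in E$ and every $k$ one has $\langle J_F T_{\pi_N(\alpha)}(\pi_N x_1,\dots,\pi_N x_n),e_k'\rangle=\alpha(k)x_1(k)\cdots x_n(k)$ for all $N\ge k$, the accumulation point is pinned down on all elementary tensors $x_1\otimes\cdots\otimes x_n\otimes e_k'$ with arbitrary (not just finitely supported) $x_i$; composing with the norm-one map $\xi:F''\to F^{\times\times}$ identifies it with $T_\alpha$. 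If you want to salvage your argument, you must introduce this domain-side projection (or prove a perturbation/density step replacing an arbitrary $M_i\in FIN(E)$ by a nearby subspace of some $E_N$ with controlled isomorphism constant, which is essentially reproving the Density Lemma and still fails without density of $E_0$ in $E$). Your first step, the uniform scalar bound showing that $T_\alpha$ maps boundedly into $F^{\times\times}$, is correct but is the easy part; placing $T_\alpha$ in $\U$ with norm $\le C$ is where the argument as written does not close.
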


\begin{proof}
Since $\U$ is maximal, by \cite[Theorem 4.5]{FH02} there exists a finitely generated tensor norm $\nu$ of order $n+1$ such that $\U(^nE;F'')\overset 1= (\otimes^n E\otimes F';\nu)'$. Then, the ball $B_{\U(^nE;F'')}$ is weak-star compact. Thus, the set $\left(J_F\circ T_{\pi_{_N}(\alpha)}\circ (\pi_{_N},\dots,\pi_{_N})\right)_N$, which is contained in the ball $C\cdot B_{\U(^nE;F'')}$, has a weak-star accumulation point  $\Phi\in C\cdot B_{\U(^nE;F'')}$. This mapping should satisfy $\Phi(x_1,\dots,x_n)(e_k')=\alpha(k)\cdot x_1(k)\cdots x_n(k)$, for all $x_1,\dots,x_n\in E$, $e_k'\in F'$.

On the other hand, the canonical mapping $\xi:F''\to F^{\times\times}$ is well defined and has norm less than or equal to 1. Hence, $\xi\circ\Phi$ belongs to $C\cdot B_{\U(^nE;F^{\times\times})}$ and
$$
\xi\circ\Phi(x_1,\dots,x_n)= \left(\Phi(x_1,\dots,x_n)(e_k')\right)_k= \left(\alpha(k)\cdot x_1(k)\cdots x_n(k)\right)_k.
$$
This says that $\xi\circ\Phi$ coincides with the mapping $T_{\alpha}$. In consequence,
$T_\alpha\in\U(^nE;F^{\times\times})$ with $\|T_\alpha\|_{\mathfrak{A}(^{n} E, F^{\times\times})} \leq C$.
\end{proof}

In particular, if $F$ is maximal and $E_0=\spanned\{e_j :  j\in\N\}$, given a diagonal multilinear operator $T_\alpha:E_0\times\dots\times E_0\to F$ such that their truncated operators satisfy $\|T_{\pi_{_N}(\alpha)}\|_{\U(^nE_N;F)}\leq C$ for all $N\in\N$, it follows that $T_\alpha\in\U(^nE;F)$ with $\|T_\alpha\|_{\U(^nE;F)}\leq C$.

 In order to prove the following result, recall the well known characterization of the maximal hull of a sequence space: $x\in E^{max}$ if and only if $\sup_{N\in\N}\|\pi_N(x)\|_E$ is finite, and the norm is given by this supremum. In other words, to ensure that a sequence space $E$ is maximal it is enough to show that if $\|\pi_N(x)\|_E\leq C$ for all $N\in\N$, then $x\in E$ with $\|x\|_E\leq C$.

\begin{proposition}\label{max}
 Let $\U$ be an ideal of $n$-linear operators and let $E$ and $F$ be K\"{o}the sequence spaces.
 \begin{enumerate}
 \item[(i)] If $\U$ is maximal, then $\boldsymbol{\ell_n}(\U;E,F^{\times\times})$ is a maximal K\"othe sequence space and $\boldsymbol{\ell_n}(\U;E,F^{\times\times})\overset 1 = \boldsymbol{\ell_n}(\U; E^{\times\times},F^{\times\times})$.
 \item[(ii)] $\boldsymbol{\ell_n}(\U;E,F)^{max}\overset 1 = \boldsymbol{\ell_n}(\U^{max};E,F^{\times\times}).$
  \end{enumerate}
\end{proposition}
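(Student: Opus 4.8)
The plan is to prove both items by leveraging Lemma~\ref{density lemma} together with the finite-dimensional identifications \eqref{suc finitas} and \eqref{suc finitas max min}, and the stated characterization of the maximal hull of a sequence space.

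\textbf{Item (i).} Assume $\U$ is maximal. First I would show $\boldsymbol{\ell_n}(\U;E,F^{\times\times})$ is maximal. By the recalled characterization, it suffices to take a sequence $\alpha$ with $\|\pi_N(\alpha)\|_{\boldsymbol{\ell_n}(\U;E,F^{\times\times})}\leq C$ for all $N$ and conclude $\alpha\in\boldsymbol{\ell_n}(\U;E,F^{\times\times})$ with norm $\leq C$. Since $F^{\times\times}$ is a K\"othe dual, it is maximal, hence $(F^{\times\times})^{\times\times}\overset 1= F^{\times\times}$. Now $\|\pi_N(\alpha)\|_{\boldsymbol{\ell_n}(\U;E,F^{\times\times})} = \|\pi_N(\alpha)\|_{\boldsymbol{\ell_n}(\U;E_N,(F^{\times\times})_N)} = \|\pi_N(\alpha)\|_{\boldsymbol{\ell_n}(\U;E_N,F^{\times\times})}$ using \eqref{suc finitas} (the first two identifications, applied with $F^{\times\times}$ in place of $F$) together with the fact that a diagonal operator $T_{\pi_N(\alpha)}$ on $E_N$ lands in $(F^{\times\times})_N$. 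Then Lemma~\ref{density lemma} (applied with $F^{\times\times}$, whose bidual K\"othe space is itself) gives $T_\alpha\in\U(^nE;(F^{\times\times})^{\times\times})=\U(^nE;F^{\times\times})$ with norm $\leq C$; this is exactly the in-particular remark following the lemma, since $F^{\times\times}$ is maximal. For the isometric identity $\boldsymbol{\ell_n}(\U;E,F^{\times\times})\overset 1= \boldsymbol{\ell_n}(\U;E^{\times\times},F^{\times\times})$, I would argue by double inclusion using \eqref{suc finitas}: for any $\alpha$ in either space, and every $N$, the $N$-truncations have equal norms by the finite-dimensional identification $\boldsymbol{\ell_n}(\U;E_N,\cdot)\overset 1= \boldsymbol{\ell_n}(\U;E_N^{\times\times},\cdot)$; since both sequence spaces are now known to be maximal, their norms are the suprema over $N$ of these truncated norms, so they agree isometrically and as sets.

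\textbf{Item (ii).} I want $\boldsymbol{\ell_n}(\U;E,F)^{max}\overset 1= \boldsymbol{\ell_n}(\U^{max};E,F^{\times\times})$. The key point is that $\U$ and $\U^{max}$ coincide isometrically on finite-dimensional spaces, so \eqref{suc finitas max min} gives $\|\pi_N(\alpha)\|_{\boldsymbol{\ell_n}(\U;E,F)} = \|\pi_N(\alpha)\|_{\boldsymbol{\ell_n}(\U^{max};E,F)}$ for every $N$, hence $\boldsymbol{\ell_n}(\U;E,F)^{max}\overset 1= \boldsymbol{\ell_n}(\U^{max};E,F)^{max}$. By the characterization of the maximal hull of a sequence space, the norm on the left is $\alpha\mapsto\sup_N\|\pi_N(\alpha)\|_{\boldsymbol{\ell_n}(\U^{max};E,F)}$. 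On the other hand, applying item (i) to the maximal ideal $\U^{max}$ yields that $\boldsymbol{\ell_n}(\U^{max};E,F^{\times\times})$ is maximal, so its norm is also $\alpha\mapsto\sup_N\|\pi_N(\alpha)\|_{\boldsymbol{\ell_n}(\U^{max};E_N,(F^{\times\times})_N)}$; and $(F^{\times\times})_N\overset 1= F_N^{\times\times}\overset 1= F_N$ (since $(E_N)'\overset 1= (E^\times)_N$ for any K\"othe space, applied twice, and finite-dimensional spaces are K\"othe reflexive). Therefore the two suprema coincide termwise, and since a sequence lies in a maximal sequence space exactly when the supremum of truncated norms is finite, the two maximal sequence spaces are equal as sets with equal norms.

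\textbf{Main obstacle.} The routine-looking but delicate point is the careful bookkeeping of which bidual (the Banach bidual $F''$ versus the K\"othe bidual $F^{\times\times}$) appears where, and checking that diagonal operators behave well under the canonical maps, so that the finite-dimensional identifications in \eqref{suc finitas} can be invoked with $F$ replaced by $F^{\times\times}$. Concretely, one must verify that for a diagonal $T_{\pi_N(\alpha)}\in\U(^nE_N;F)$ its image genuinely sits in $F_N\overset 1= (F^{\times\times})_N$, and that the norm is unchanged when we regard it as an operator into $F^{\times\times}$; this is where the normal property of K\"othe spaces and the identity $(E_N)'\overset1=(E^\times)_N$ do the work. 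Everything else is a direct combination of the density lemma, the maximal-hull characterization for sequence spaces, and the finite-dimensional isometries already recorded.
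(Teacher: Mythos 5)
Your proposal is correct and follows essentially the same route as the paper: both items rest on Lemma~\ref{density lemma} (applied with $F^{\times\times}$ in place of $F$, using that a K\"othe dual is maximal so $(F^{\times\times})^{\times\times}\overset 1= F^{\times\times}$), the finite-dimensional identifications \eqref{suc finitas} and \eqref{suc finitas max min}, and the characterization of the maximal hull of a sequence space as the supremum of truncated norms. The extra bookkeeping you flag about $(F^{\times\times})_N\overset 1= F_N$ is handled implicitly in the paper by \eqref{suc finitas}, and your treatment of it is consistent with that.
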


\begin{proof}
(i) Suppose that $\|\pi_{_N}(\alpha)\|_{\boldsymbol{\ell_n}(\U;E,F^{\times\times})}\leq C$ for all $N\in\N$. By  identity \eqref{suc finitas}, we have that $\|\pi_{_N}(\alpha)\|_{\en(\U;E_N,F^{\times\times})}= \|\pi_{_N}(\alpha)\|_{\en(\U;E,F^{\times\times})}\leq C.$
Then, by Lemma~\ref{density lemma}, $\alpha\in\boldsymbol{\ell_n}(\U;E,F^{\times\times})$ with $\|\alpha\|_{\boldsymbol{\ell_n}(\U;E,F^{\times\times})}\leq C$.
So, $\en(\U;E,F^{\times\times})$ is maximal and the same is true for $ \boldsymbol{\ell_n}(\U; E^{\times\times},F^{\times\times})$. Again, identity \eqref{suc finitas}, assures, for each sequence $\alpha$, that $\|\pi_{_N}(\alpha)\|_{\boldsymbol{\ell_n}(\U;E,F^{\times\times})} =\|\pi_{_N}(\alpha)\|_{\boldsymbol{\ell_n}(\U; E^{\times\times},F^{\times\times})},$
for all $N\in\N$. Therefore, $\boldsymbol{\ell_n}(\U;E,F^{\times\times})\overset 1 = \boldsymbol{\ell_n}(\U; E^{\times\times},F^{\times\times})$.

\noindent (ii) For a sequence $\alpha$, by identities \eqref{suc finitas} and \eqref{suc finitas max min}, $\|\pi_{_N}(\alpha)\|_{\boldsymbol{\ell_n}(\U;E,F)}=\|\pi_{_N}(\alpha)\|_{\boldsymbol{\ell_n}(\U^{max};E,F^{\times\times})}$ and  $\boldsymbol{\ell_n}(\U;E,F)^{max}\overset 1 = \boldsymbol{\ell_n}(\U^{max};E,F^{\times\times})^{max}$. Then, applying item (i) we have that $\boldsymbol{\ell_n}(\U^{max};E,F^{\times\times})^{max} \overset 1=\boldsymbol{\ell_n}(\U^{max};E,F^{\times\times})$, which completes the proof.
\end{proof}

\begin{remark} \rm
By the previous proposition, if both the ideal $\U$ and the sequence space $F$ are maximal, then the sequence space $\boldsymbol{\ell_n}(\U;E,F)$ is maximal. Note that the condition over $F$ is necessary for  $\boldsymbol{\ell_n}(\U;E,F)$ to be maximal. Indeed, if $\U=\B$, $E=\ell_\infty$ and $F=c_0$, it follows that $\boldsymbol{\ell_n}(\B;\ell_\infty,c_0)=c_0$, which obviously is not a maximal sequence space.
\end{remark}

\bigskip

Now, we turn to look into the minimal hull.
Recall that a sequence space $E$ is minimal if and only if for all $x\in E$, $\|(\pi_{_N}^E-I_E)(x)\|_E\to0$, or equivalently,  $\pi_N^E$ tends to $I_E$ over compact sets.

\begin{proposition}\label{teo minimal}
Let $\U$ be an ideal of $n$-linear operators and let $E$ and $F$ be K\"{o}the sequence spaces.
 \begin{enumerate}
 \item[(i)] If $\U$ is a minimal ideal and $F$ is a minimal K\"othe sequence space, then $\boldsymbol{\ell_n}(\U;E,F)$ is a minimal sequence space.
 \item[(ii)] $\boldsymbol{\ell_n}(\U;E,F)^{min}\overset 1 = \boldsymbol{\ell_n}(\U^{min};E,F^{min}).$
  \end{enumerate}
\end{proposition}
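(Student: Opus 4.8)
The plan is to mirror the structure of the proof of Proposition \ref{max}, replacing the characterization of maximality by the characterization of minimality recalled just before the statement: $E$ is minimal if and only if $\pi_N^E\to I_E$ uniformly on compact subsets of $E$. I would first establish (i) and then deduce (ii) by the same bootstrapping trick used for the maximal case, combining the finite-dimensional identities \eqref{suc finitas} and \eqref{suc finitas max min}.

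For (i), let $\alpha\in\boldsymbol{\ell_n}(\U;E,F)$, so $T_\alpha\in\U(^nE;F)$. The goal is to show $\|(\pi_N-I)(\alpha)\|_{\boldsymbol{\ell_n}(\U;E,F)}\to 0$, i.e. $\|T_\alpha-T_{\pi_N(\alpha)}\|_{\U(^nE;F)}\to 0$. Write $T_\alpha-T_{\pi_N(\alpha)}=T_{(I-\pi_N)(\alpha)}$. Since $\U$ is minimal, $\U=\U^{min}=\fin\circ\U\circ(\fin,\dots,\fin)$, and because the truncations $\pi_N^F$ on $F$ (which is minimal) and $\pi_N^E$ on $E$ are finite-rank, the operator $T_\alpha$ itself admits a factorization through approximable operators; the natural move is to insert the truncations and estimate $T_{(I-\pi_N)(\alpha)} = (I_F-\pi_N^F)\circ T_\alpha$ on one hand, and on the other hand use that $\pi_N^E\to I_E$ on compacts. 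Concretely: factor $T_\alpha = A\circ S\circ(B_1,\dots,B_n)$ with $A,B_i\in\fin$ and $S\in\U$; for $\varepsilon>0$ pick finite-rank $\tilde A,\tilde B_i$ approximating $A,B_i$, so that modulo $\varepsilon$ the operator $T_\alpha$ is finite-rank diagonal-like, and then observe that for such an operator the diagonal tail $(I-\pi_N)(\alpha)$ eventually vanishes (since $\tilde A$ has finite-dimensional range and $\tilde B_i$ factor through finite-dimensional spaces, only finitely many coordinates survive). The minimality of $F$ enters to guarantee that the $F$-valued factorization can be truncated on the target side with controlled norm, i.e. that $\pi_N^F T_\alpha\to T_\alpha$ in $\U$-norm. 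Assembling these estimates gives $\|T_{(I-\pi_N)(\alpha)}\|_{\U}\to 0$.

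For (ii), by \eqref{suc finitas max min} one has $\boldsymbol{\ell_n}(\U;E_N,F_N)\iso\boldsymbol{\ell_n}(\U^{min};E_N,F_N)$, and $F_N\iso (F^{min})_N$ since $F$ and $F^{min}$ share finite-dimensional sections; hence $\boldsymbol{\ell_n}(\U;E,F)$ and $\boldsymbol{\ell_n}(\U^{min};E,F^{min})$ have isometrically identical $N$-truncations, so their minimal kernels coincide isometrically. Then $\U^{min}$ is minimal and $F^{min}$ is minimal, so part (i) applies to give $\boldsymbol{\ell_n}(\U^{min};E,F^{min})$ minimal, i.e. equal to its own minimal kernel; combining, $\boldsymbol{\ell_n}(\U;E,F)^{min}\iso\boldsymbol{\ell_n}(\U^{min};E,F^{min})^{min}\iso\boldsymbol{\ell_n}(\U^{min};E,F^{min})$.

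The main obstacle I anticipate is making the factorization argument in (i) genuinely quantitative: one must pass from an abstract approximable factorization $T_\alpha=A\circ S\circ(B_1,\dots,B_n)$ to an honest norm estimate on the diagonal tail $T_{(I-\pi_N)(\alpha)}$, controlling simultaneously the approximation of $A$ and each $B_i$ by finite-rank operators and the interaction with the coordinate projections. The delicate point is that truncating the $B_i$ on the domain side need not commute with the diagonal structure, so one likely has to argue via the target side instead — writing $T_{(I-\pi_N)(\alpha)}=(I_F-\pi_N^F)\circ T_\alpha$ — and here minimality of $F$ is exactly what licenses $\|(I_F-\pi_N^F)\circ T_\alpha\|_{\U}\to 0$, because for a minimal ideal the composition with a net of projections converging to the identity on compacts converges in ideal norm. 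I would need to verify carefully that the relevant convergence is uniform over the unit ball of $\U(^nE;F)$-type objects appearing here, or rather for the fixed operator $T_\alpha$, which is what is actually required.
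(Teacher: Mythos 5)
Your part (ii) is essentially the paper's argument: the truncations of $\boldsymbol{\ell_n}(\U;E,F)$ and $\boldsymbol{\ell_n}(\U^{min};E,F^{min})$ coincide isometrically by \eqref{suc finitas} and \eqref{suc finitas max min}, the minimal kernels are therefore the same (the paper phrases this as a Cauchy-sequence argument for $(\pi_N(\alpha))_N$), and item (i) applied to the minimal pair $(\U^{min},F^{min})$ finishes the job. No complaint there.

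In part (i), however, the one concrete mechanism you offer for killing the tail is unsound, and the correct mechanism is only announced, not proved. Approximating $A$ and the $B_i$ by finite-rank operators does \emph{not} make ``only finitely many coordinates survive'': a rank-one operator into $F$ can have range spanned by a vector of infinite support, so the diagonal tail of the $\varepsilon$-perturbed operator need not eventually vanish, and in any case truncating the domain factors does not interact with the diagonal structure (as you yourself note). The step you defer as the ``main obstacle'' is in fact the whole proof, and it is a one-line consequence of the ideal property applied to the \emph{outer} factor only. Write $T_\alpha=A\circ S\circ(B_1,\dots,B_n)$ with $A,B_i$ approximable and $S\in\U$; then
\[
\bigl\|(I_F-\pi_N^F)\circ T_\alpha\bigr\|_{\U(^nE;F)}\;\le\;\bigl\|(I_F-\pi_N^F)\circ A\bigr\|\cdot\|S\|_{\U}\cdot\|B_1\|\cdots\|B_n\|,
\]
and $\|(I_F-\pi_N^F)\circ A\|\to 0$ because $A$, being approximable, maps the unit ball into a relatively compact subset of $F$, on which $\pi_N^F\to I_F$ uniformly since $F$ is minimal. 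Combined with your (correct) identity $(I_F-\pi_N^F)\circ T_\alpha=T_{(I-\pi_N)(\alpha)}$, this gives $\|(I-\pi_N)(\alpha)\|_{\boldsymbol{\ell_n}(\U;E,F)}\to 0$. No $\varepsilon$-approximation of $A$ or of the $B_i$ is needed, and no uniformity over any unit ball of operators: compactness of the single factor $A$ is exactly what converts pointwise convergence $\pi_N^F\to I_F$ on $F$ into operator-norm convergence of $(I_F-\pi_N^F)\circ A$. This is precisely how the paper argues.
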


\begin{proof}
(i) Let $\alpha\in\boldsymbol{\ell_n}(\U;E,F)$. Since $\U$ is minimal, there exist approximable linear operators $A_1,\dots,A_n,B$ and $S\in\U$ such that $T_\alpha=B\circ S\circ(A_1,\dots,A_n)$.
Then,
  \[(\pi_{_N}^F-I_F)\circ T_\alpha=(\pi_{_N}^F-I_F)\circ B\circ S\circ(A_1,\dots,A_n)\in\U(^nE;F),\] and  $\|(\pi_{_N}^F-I_F)\circ T_\alpha\|_{\U(^nE;F)}\leq\|(\pi_{_N}^F-I_F)\circ B\|\cdot\|S\|_{\U}\cdot\|A_1\|\cdots\|A_n\|.$
Now, the mapping $B\in\fin$, which has $F$ as its target space, is compact and $\pi_{_N}^F$ tends to $I_F$ over compact sets (because $F$ is minimal), therefore $\|(\pi_{_N}^F-I_F)\circ B\|$ tends to zero. In consequence, $\|(\pi_{_N}^F-I_F)\circ T_\alpha\|_{\U(^nE;F)}$ tends to zero also
and $\en(\U;E,F)$ is minimal.

\noindent  (ii) For $\alpha\in\boldsymbol{\ell_n}(\U;E,F)^{min}$, the norm $\|\pi_{_N}(\alpha)-\alpha\|_{\boldsymbol{\ell_n}(\U;E,F)}$ tends to zero.
By the identities \eqref{suc finitas} and \eqref{suc finitas max min} we have $\|\pi_{_N}(\alpha)\|_{\boldsymbol{\ell_n}(\U;E,F)}=\|\pi_{_N}(\alpha)\|_{\boldsymbol{\ell_n}(\U^{min};E,F^{min})}.$
Then, $\left(\pi_{_N}(\alpha)\right)_N$ is a Cauchy sequence in the Banach sequence space $\boldsymbol{\ell_n}(\U^{min};E,F^{min})$ and hence it converges to a sequence in $\boldsymbol{\ell_n}(\U^{min};E,F^{min})$ that coincides with $\alpha$ coordinate by coordinate. In other words, this says that $\alpha\in\boldsymbol{\ell_n}(\U^{min};E,F^{min})$ and
\begin{eqnarray*}
\|\alpha\|_{\boldsymbol{\ell_n}(\U^{min};E,F^{min})} &\leq & \|\pi_{_N}(\alpha)-\alpha\|_{\boldsymbol{\ell_n}(\U^{min};E,F^{min})}+\|\pi_{_N}(\alpha)\|_{\boldsymbol{\ell_n}(\U^{min};E,F^{min})}\\
&=& \|\pi_{_N}(\alpha)-\alpha\|_{\boldsymbol{\ell_n}(\U^{min};E,F^{min})}+\|\pi_{_N}(\alpha)\|_{\boldsymbol{\ell_n}(\U;E,F)}\to \|\alpha\|_{\boldsymbol{\ell_n}(\U;E,F)^{min}}.
\end{eqnarray*}
The reverse inclusion holds by item (i).
\end{proof}

Now, we analyze the relationship between the sequence space associated to an ideal with the associated to its adjoint.
Note that when we have a finite dimensional space, its K\"{o}the dual and its classical dual coincide. Moreover, if we call $\nu$  the tensor norm associated to the ideal $\U$, we have,
\begin{center}
$\U^*(^nE_N^\times;F_N^\times)\overset 1 =\left(\bigotimes^nE_N^\times\otimes F_N^{\times\times},\nu\right)'\overset 1 =\U(^nE_N^{\times\times};F_N^{\times\times})'\overset 1 =\U(^nE_N;F_N)'.$
\end{center}
The duality is given in the following way: if $T\in\U(^nE_N;F_N)$ and $S\in\U^*(^nE_N^\times;F_N^\times)$, we can represent them as $T=\sum_{i} x^\times_{i,1}\otimes\cdots\otimes x^\times_{i,n}\otimes y_i$ and $S=\sum_{j} x_{j,1}\otimes\cdots\otimes x_{j,n}\otimes y_j^\times$. Then,
\[\langle S,T \rangle =\sum_{i,j} x^\times_{i,1}(x_{j,1})\cdots x^\times_{i,n}(x_{j,n})\cdot y_j^\times(y_i)\]

It is plain that  $|\langle S,T\rangle|\leq\|S\|_{\U^*(^nE_N^\times;F_N^\times)}\cdot\|T\|_{\U(^nE_N;F_N)}$. Moreover, if $S$ is diagonal, there exists a sequence $\beta$ such that $S=S_\beta=\sum_{j=1}^N \beta(j)\cdot e_j\otimes\cdots\otimes e_j\otimes e_j^\times$. Then,  $\langle S_\beta,T\rangle=\langle S_\beta, D(T)\rangle,$
where  $D(T)=\sum_{i=1}^N T(e_i,\dots,e_i)(i)\cdot e_i^\times\otimes\cdots\otimes e_i^\times\otimes e_i$. A direct argument through this last observation yields to the following result.

\begin{lemma}\label{adjunto finito}
Let $\U$ be an ideal of $n$-linear operators and let $E$ and $F$ be K\"{o}the sequence spaces. Then, $\boldsymbol{\ell_n}(\U;E_N,F_N)^\times\overset 1 =\boldsymbol{\ell_n}(\U^*;E_N^\times,F_N^\times).$
\end{lemma}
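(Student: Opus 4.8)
The plan is to establish the isometric identity $\boldsymbol{\ell_n}(\U;E_N,F_N)^\times\overset 1 =\boldsymbol{\ell_n}(\U^*;E_N^\times,F_N^\times)$ by exploiting the finite-dimensional trace duality displayed just before the statement, restricted to diagonal operators. First I would fix a sequence $\beta=(\beta(1),\dots,\beta(N))$ and observe that the diagonal multilinear operator $S_\beta=\sum_{j=1}^N\beta(j)\,e_j\otimes\cdots\otimes e_j\otimes e_j^\times$ sits in $\U^*(^nE_N^\times;F_N^\times)$, so by the trace duality $\U^*(^nE_N^\times;F_N^\times)\overset 1 =\U(^nE_N;F_N)'$ it induces a functional on $\U(^nE_N;F_N)$ of norm exactly $\|S_\beta\|_{\U^*(^nE_N^\times;F_N^\times)}=\|\beta\|_{\boldsymbol{\ell_n}(\U^*;E_N^\times,F_N^\times)}$. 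The key computational point, already noted in the excerpt, is that $\langle S_\beta,T\rangle=\langle S_\beta,D(T)\rangle=\sum_{i=1}^N\beta(i)\cdot T(e_i,\dots,e_i)(i)$ depends on $T$ only through the scalar sequence $\bigl(T(e_i,\dots,e_i)(i)\bigr)_{i=1}^N$; in particular, when $T=T_\alpha$ is diagonal with symbol $\alpha$, this pairing is simply $\sum_{i=1}^N\beta(i)\alpha(i)$, the standard bilinear pairing between the truncations.

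Next I would argue that the norm of $S_\beta$ as a functional on $\U(^nE_N;F_N)$ is the same as its norm as a functional on the subspace of \emph{diagonal} operators $\boldsymbol{\ell_n}(\U;E_N,F_N)\subseteq\U(^nE_N;F_N)$. One inequality is trivial (restricting to a subspace does not increase the norm); for the reverse, given any $T\in\U(^nE_N;F_N)$ I would replace it by $D(T)$ — the diagonal of $T$ — and check that $\|D(T)\|_{\U(^nE_N;F_N)}\leq\|T\|_{\U(^nE_N;F_N)}$, which follows from the ideal property by averaging: in a K\"othe sequence space one can write the diagonal projection as an average (over the torus $\mathbb T^N$) of the operators $x\mapsto (s\cdot\text{-})\circ T\circ(\bar s\cdot\text{-},\dots,\bar s\cdot\text{-})$ with unimodular diagonal multipliers $s$, each of which has $\U$-norm equal to $\|T\|_{\U}$ because multiplication by a unimodular diagonal is an isometry on $E_N$ and on $F_N$. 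Since $\langle S_\beta,T\rangle=\langle S_\beta,D(T)\rangle$ and $D(T)$ is diagonal with symbol in $\boldsymbol{\ell_n}(\U;E_N,F_N)$ of norm at most $\|T\|_{\U}$, the supremum over the unit ball of $\U(^nE_N;F_N)$ is attained already on diagonal operators. Consequently $\|\beta\|_{\boldsymbol{\ell_n}(\U^*;E_N^\times,F_N^\times)}=\sup\bigl\{\bigl|\sum_i\beta(i)\alpha(i)\bigr|:\|\alpha\|_{\boldsymbol{\ell_n}(\U;E_N,F_N)}\leq1\bigr\}=\|\beta\|_{\boldsymbol{\ell_n}(\U;E_N,F_N)^\times}$, using that for finite-dimensional K\"othe spaces the K\"othe dual coincides with the usual dual and with the dual pairing, as recalled in the excerpt.

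Finally I would verify that this identification is onto, i.e. that \emph{every} functional on $\boldsymbol{\ell_n}(\U;E_N,F_N)$ — equivalently every element of the finite-dimensional K\"othe dual $\boldsymbol{\ell_n}(\U;E_N,F_N)^\times$ — arises as some $S_\beta$; this is immediate since a functional on a finite-dimensional K\"othe sequence space is given by a sequence $\beta$, and that $\beta$ lands in $\boldsymbol{\ell_n}(\U^*;E_N^\times,F_N^\times)$ precisely because $S_\beta\in\U^*(^nE_N^\times;F_N^\times)$ by the trace-duality identity. Putting the pieces together gives the claimed isometric equality. The step I expect to require the most care is the averaging argument showing $\|D(T)\|_\U\leq\|T\|_\U$ — one must be slightly careful that the Haar-average over $\mathbb T^N$ of the (operator-valued) conjugations indeed converges to $D(T)$ in $\U(^nE_N;F_N)$ and that the resulting norm bound is genuinely $\leq\|T\|_\U$ rather than just finite; everything else is a routine unwinding of the trace duality and the definition of the associated sequence space and its K\"othe dual.
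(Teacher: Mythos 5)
Your proposal is correct and follows essentially the same route the paper intends: the finite-dimensional trace duality $\U^*(^nE_N^\times;F_N^\times)\overset 1 =\U(^nE_N;F_N)'$, the observation $\langle S_\beta,T\rangle=\langle S_\beta,D(T)\rangle$, and the reduction to diagonal operators via $\|D(T)\|_{\U}\leq\|T\|_{\U}$ (which the paper leaves implicit in ``a direct argument through this last observation''). Your averaging over unimodular diagonal multipliers is the standard way to justify that last norm estimate and works without issue in this finite-dimensional setting.
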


As a consequence of the preceding lemma and the identity \eqref{suc finitas} we have
\begin{eqnarray}\label{ln-en}
  \boldsymbol{\ell_n}(\U;E,F)_N^\times  \overset 1  =  \boldsymbol{\ell_n}(\U;E_N,F_N)^\times\overset 1 = \boldsymbol{\ell_n}(\U^*;E_N^\times,F_N^\times)  \overset 1 = \boldsymbol{\ell_n}(\U^*;E^\times,F^\times)_N.
\end{eqnarray}
This equality allows us to give a general result that relates the sequence space associated to an ideal with the corresponding sequence space associated to its adjoint ideal.

\begin{proposition}\label{dual_kothe_adjunto}
Let $\U$ be an ideal of $n$-linear operators and let $E$ and $F$ be K\"{o}the sequence spaces. Then,  $\boldsymbol{\ell_n}(\U;E,F)^\times\overset 1 = \boldsymbol{\ell_n}(\U^*;E^\times,F^\times).$
\end{proposition}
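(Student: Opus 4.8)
The plan is to deduce the infinite-dimensional isometry from the finite-dimensional one, \eqref{ln-en}, by using the fact that the K\"othe dual is a \emph{maximal} operation on K\"othe sequence spaces, so it is entirely determined by its finite truncations. Concretely, for any K\"othe sequence space $G$ one has $G^\times \overset 1 = (G^\times)^{max}$, and the norm of the maximal hull is recovered as $\|z\|_{G^{\times}} = \sup_{N} \|\pi_N(z)\|_{G^\times}$; moreover $(G^\times)_N \overset 1 = (G_N)'$, which is the content already invoked in the preliminaries. So I would like to write $\boldsymbol{\ell_n}(\U;E,F)^\times$ as a space whose $N$-th truncation is $\boldsymbol{\ell_n}(\U^*;E^\times,F^\times)_N$ isometrically, and whose norm is the supremum of the truncated norms, and then do the same for $\boldsymbol{\ell_n}(\U^*;E^\times,F^\times)$; matching the two gives the result.

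The key steps, in order: (1) Recall that $\boldsymbol{\ell_n}(\U^*;E^\times,F^\times)$ is itself a K\"othe dual? No---it need not be. So instead I use the general principle: for \emph{any} K\"othe sequence space $H$, a sequence $\beta$ lies in $H^\times$ iff $\sup_N \|\pi_N(\beta)\|_{H^\times}<\infty$ (because $H^\times$ is maximal), and $\|\beta\|_{H^\times}$ equals that supremum; and $\|\pi_N(\beta)\|_{H^\times} = \|\pi_N(\beta)\|_{(H_N)'}$. Apply this with $H=\boldsymbol{\ell_n}(\U;E,F)$: a sequence $\beta$ is in $\boldsymbol{\ell_n}(\U;E,F)^\times$ iff $\sup_N \|\pi_N(\beta)\|_{\boldsymbol{\ell_n}(\U;E,F)_N'} = \sup_N\|\pi_N(\beta)\|_{\boldsymbol{\ell_n}(\U;E,F)_N^\times}<\infty$, with the norm given by that supremum. (2) By \eqref{ln-en}, $\|\pi_N(\beta)\|_{\boldsymbol{\ell_n}(\U;E,F)_N^\times} = \|\pi_N(\beta)\|_{\boldsymbol{\ell_n}(\U^*;E^\times,F^\times)_N}$ for every $N$. (3) It remains to see that $\sup_N \|\pi_N(\beta)\|_{\boldsymbol{\ell_n}(\U^*;E^\times,F^\times)_N}<\infty$ (with equality of norms) characterises membership of $\beta$ in $\boldsymbol{\ell_n}(\U^*;E^\times,F^\times)$. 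This is exactly the statement that $\boldsymbol{\ell_n}(\U^*;E^\times,F^\times)$ is a \emph{maximal} K\"othe sequence space---which is where Proposition~\ref{max}(i) enters: since $\U^*$ is always maximal (the representation theorem for maximal ideals) and $F^\times$ is always maximal, Proposition~\ref{max}(i) gives that $\boldsymbol{\ell_n}(\U^*;E^\times,F^\times)$, which equals $\boldsymbol{\ell_n}(\U^*;E^\times,(F^\times)^{\times\times})$, is maximal. Then by the characterisation of the maximal hull recalled before Proposition~\ref{max}, membership and norm in $\boldsymbol{\ell_n}(\U^*;E^\times,F^\times)$ are read off from $\sup_N\|\pi_N(\cdot)\|_{(\cdot)_N}$. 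Combining (1)--(3) gives $\boldsymbol{\ell_n}(\U;E,F)^\times \overset 1 = \boldsymbol{\ell_n}(\U^*;E^\times,F^\times)$.

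The main obstacle is step (3): one must make sure that the "maximality" input is legitimately available. It relies on two facts that are both quoted in the preliminaries---that $\U^*$ is maximal for \emph{any} normed ideal $\U$ (via the representation theorem), and that $E^\times$, $F^\times$ are maximal K\"othe sequence spaces---together with Proposition~\ref{max}(i), which requires the \emph{target} space to be of the form $G^{\times\times}$; here $G^\times = (F^\times)^{\times\times}$ since K\"othe duals are K\"othe reflexive in the sense $G^{\times\times\times}\overset1=G^\times$, so $F^\times$ is its own bidual and Proposition~\ref{max}(i) applies directly. A secondary, purely bookkeeping point is to confirm that the canonical identifications $(H^\times)_N \overset1= (H_N)'$ and the duality pairing used in \eqref{ln-en} are compatible, i.e. that the isometry $\boldsymbol{\ell_n}(\U;E,F)_N^\times \overset1= \boldsymbol{\ell_n}(\U^*;E^\times,F^\times)_N$ from \eqref{ln-en} is implemented by the \emph{same} pairing (restriction of the $E\times E$ bilinear form) as the one defining the K\"othe dual---this is immediate from Lemma~\ref{adjunto finito} and the explicit diagonal pairing $\langle S_\beta, T\rangle$ computed just before it, but should be stated to keep the isometry (not merely isomorphism) honest.
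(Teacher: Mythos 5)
Your proof is correct and takes essentially the same route as the paper: the paper's own two-line argument passes from \eqref{ln-en} to $\left(\boldsymbol{\ell_n}(\U;E,F)^\times\right)^{max}\overset 1 = \boldsymbol{\ell_n}(\U^*;E^\times,F^\times)^{max}$ and then notes that both sides are already maximal (the left because K\"othe duals are maximal, the right by Proposition~\ref{max}(i) applied to the maximal ideal $\U^*$ with target $F^\times\overset1=(F^\times)^{\times\times}$). Your unwinding of maximality as the finiteness of $\sup_N\|\pi_N(\cdot)\|$ is exactly the mechanism behind that argument, so nothing further is needed.
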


\begin{proof}
For a sequence $\alpha$, by identity \eqref{ln-en}, we have $\left(\boldsymbol{\ell_n}(\U;E,F)^\times\right)^{max}\overset 1 = \boldsymbol{\ell_n}(\U^*;E^\times,F^\times)^{max}$. 
Since a K\"othe dual and an adjoint ideal are maximal, Proposition \ref{max} gives the result.
\end{proof}

Finally, as a consequence of the Propositions  \ref{dual_kothe_adjunto} and \ref{max} and the fact that $\U^{**}\iso\U^{max}$, we obtain the following equalities:
\begin{eqnarray}\label{ln adjunto maximales}
\boldsymbol{\ell_n}(\U^*;E^\times,F^\times)^\times\overset 1 = \boldsymbol{\ell_n}(\U^{**};E^{\times\times},F^{\times\times})
\overset 1 = \boldsymbol{\ell_n}(\U^{\max};E,F^{\times\times}).
\end{eqnarray}

In particular, if $\U$ and $F$ are maximal, $\boldsymbol{\ell_n}(\U;E,F)\overset 1 = \boldsymbol{\ell_n}(\U^*;E^\times,F^\times)^\times$.

\section{Some applications: Multipliers and Lorentz sequence spaces}

An example of a sequence space associated to a set of operators is the  \emph{space of multipliers} from $E$ into $F$, $\M(E,F)$ \cite{DMM02}, which is defined, in our notation, as
\begin{center}
  $\M(E;F)=\boldsymbol{\ell_1}(\B;E,F).$
\end{center}
We begin by showing that our sequence space associated to a multilinear ideal can be seen inside a suitable space of multipliers.

\begin{proposition}\label{prop multiplicadores}
Let $\U$ be an ideal of $n$-linear operators and let $E$ and $F$ be K\"othe sequence spaces. Then, $\boldsymbol{\ell_n}(\U;E,F)\overset{\leq1}\hookrightarrow \M(F^\times,\boldsymbol{\ell_n}(\U;E)).$
\end{proposition}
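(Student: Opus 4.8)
The plan is to unwind the definition of the multiplier space and then exploit the ideal property of $\U$ under composition with a linear functional on the target space. Recall that $\M(F^\times;\boldsymbol{\ell_n}(\U;E))=\boldsymbol{\ell_1}(\B;F^\times,\boldsymbol{\ell_n}(\U;E))$ consists of those sequences $\alpha\in\ei$ for which the diagonal linear operator $z\mapsto \alpha\cdot z$ maps $F^\times$ boundedly into $\boldsymbol{\ell_n}(\U;E)$, with norm $\|\alpha\|_{\M(F^\times;\boldsymbol{\ell_n}(\U;E))}=\sup_{\|z\|_{F^\times}\le1}\|\alpha\cdot z\|_{\boldsymbol{\ell_n}(\U;E)}$. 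Hence, given $\alpha\in\boldsymbol{\ell_n}(\U;E,F)$, i.e. $T_\alpha\in\U(^nE;F)$, it suffices to prove that for every $z\in F^\times$ the sequence $\alpha\cdot z$ lies in $\boldsymbol{\ell_n}(\U;E)$ with $\|\alpha\cdot z\|_{\boldsymbol{\ell_n}(\U;E)}\le\|z\|_{F^\times}\cdot\|\alpha\|_{\boldsymbol{\ell_n}(\U;E,F)}$.

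The key step is the following. Fix $z\in F^\times$. By the characterization of the K\"othe dual recalled in Section~\ref{preliminares}, the map $\varphi_z\colon F\to\C$, $\varphi_z(y)=\sum_{k\in\N}z(k)\,y(k)$, is a well-defined linear functional with $\|\varphi_z\|_{F'}\le\|z\|_{F^\times}$. Since $\U$ is an ideal of $n$-linear operators, composition with $\varphi_z$ keeps us inside $\U$: $\varphi_z\circ T_\alpha\in\U(^nE;\C)$ and $\|\varphi_z\circ T_\alpha\|_{\U}\le\|\varphi_z\|\cdot\|T_\alpha\|_{\U}\le\|z\|_{F^\times}\cdot\|\alpha\|_{\boldsymbol{\ell_n}(\U;E,F)}$. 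A direct computation then identifies this composition as the diagonal $n$-linear form associated with $\alpha\cdot z$: for $x_1,\dots,x_n\in E$,
\[\varphi_z\circ T_\alpha(x_1,\dots,x_n)=\sum_{k\in\N}z(k)\,\alpha(k)\,x_1(k)\cdots x_n(k)=T_{\alpha\cdot z}(x_1,\dots,x_n).\]
Therefore $T_{\alpha\cdot z}\in\U(^nE;\C)$, that is $\alpha\cdot z\in\boldsymbol{\ell_n}(\U;E)$, with the norm bound above.

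Taking the supremum over $z\in B_{F^\times}$ yields $\|\alpha\|_{\M(F^\times;\boldsymbol{\ell_n}(\U;E))}\le\|\alpha\|_{\boldsymbol{\ell_n}(\U;E,F)}$, which is exactly the claimed norm-one inclusion $\boldsymbol{\ell_n}(\U;E,F)\inc\M(F^\times;\boldsymbol{\ell_n}(\U;E))$. I do not expect a genuine obstacle here; the only points deserving a little care are the identification of $\varphi_z$ as a bounded functional on all of $F$ of norm at most $\|z\|_{F^\times}$ (which is precisely the content of the fact about K\"othe duals quoted from \cite[Lemma 2.8]{BeSha88}) and the observation that $\alpha\cdot z\in\ei$, so that the diagonal form $T_{\alpha\cdot z}$ is meaningful — this is immediate since $\alpha\in\ei$ and $F^\times\inc\ei$.
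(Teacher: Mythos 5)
Your proof is correct and follows essentially the same route as the paper's: compose $T_\alpha$ with the functional $\varphi_z$ induced by $z\in F^\times$, invoke the ideal property to bound $\|\varphi_z\circ T_\alpha\|_{\U}$, and identify the composition as the diagonal form $T_{\alpha\cdot z}$. The only cosmetic difference is that you spell out the unwinding of the multiplier norm and the bound $\|\varphi_z\|_{F'}\le\|z\|_{F^\times}$, which the paper uses implicitly.
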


\begin{proof}
Let $\alpha\in\boldsymbol{\ell_n}(\U;E,F)$ and let $\beta\in F^\times$. Consider $\varphi_\beta\in F'$ given by $\varphi_\beta(x)=\sum_{k\in\N}\beta(k)\cdot x(k)$. If we compose $\varphi_\beta$ with $T_\alpha$, we obtain $\phi_{\alpha\cdot\beta}$ the diagonal $n$-linear form associated to $\alpha\cdot\beta$. Then, by the ideal property, $\phi_{\alpha\cdot\beta}$ belongs to $\U(^nE)$ and
\begin{equation*}
  \|\phi_{\alpha\cdot\beta}\|_{\U(^nE)}\leq\|\varphi_\beta\|_{F'}\cdot\|T_\alpha\|_{\U(^nE;F)}=\|\beta\|_{F^\times}\cdot\|\alpha\|_{\en(\U;E,F)}.
\end{equation*}
In consequence, $\alpha$ belongs to $\M(F^\times,\boldsymbol{\ell_n}(\U;E))$ and $\|\alpha\|_{\M(F^\times,\boldsymbol{\ell_n}(\U;E))}\le \|\alpha\|_{\boldsymbol{\ell_n}(\U;E,F)}$.
\end{proof}

In general, the inclusion given in Proposition \ref{prop multiplicadores} is not an equality. For example, in \cite{CDSV14} it is proved that $\boldsymbol{\ell_n}(\E;\ell_{\frac 3 2},\ell_4)=\ell_4\neq\M(\ell_4^\times,\boldsymbol{\ell_n}(\E;\ell_{\frac3 2}))=\M(\ell_{\frac4 3},\ell_{\frac3 2})=\ell_\infty$, where $\E$ is the ideal of extendible multilinear operators. Another example from the same article is the following:  $\boldsymbol{\ell_n}(\I;\ell_1,\ell_q)=\ell_q\neq\M(\ell_q^\times,\boldsymbol{\ell_n}(\I;\ell_1))=\M(\ell_q^\times,\ei)=\ell_\infty$.

However, for the very particular case of  the ideal of continuous multilinear operators the (isometric) equality holds when  the target set is a K\"{o}the dual.

\begin{proposition}\label{ln-multiplicadores}
   For K\"{o}the sequence spaces $E$ and $F$ we have that
   \begin{center}
     $\boldsymbol{\ell_n}(\B;E,F^{\times})\overset 1 = \M(F,\boldsymbol{\ell_n}(\B;E)).$
   \end{center}
In particular, if $F$ is maximal, $\boldsymbol{\ell_n}(\B;E,F)\overset 1 = \M(F^\times,\boldsymbol{\ell_n}(\B;E)).$
\end{proposition}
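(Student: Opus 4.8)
The plan is to verify the isometric identity by showing that, for every $\alpha\in\ell_\infty$, the two norms are one and the same iterated supremum, the only difference between the two sides being the order in which the two suprema are taken (so that equality of the underlying sets drops out together with equality of the norms, reading everything in $[0,+\infty]$). Fix $\alpha\in\ell_\infty$.

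First I would unwind the left-hand side. By definition $\|\alpha\|_{\boldsymbol{\ell_n}(\B;E,F^\times)}=\|T_\alpha\|$, where $T_\alpha(x_1,\dots,x_n)=\alpha\cdot x_1\cdots x_n$ is required to take values in $F^\times$. Using the formula $\|z\|_{F^\times}=\sup_{\|\beta\|_F\le 1}\big|\sum_k z(k)\beta(k)\big|$ recorded in Section~\ref{preliminares}, this gives
\[
\|\alpha\|_{\boldsymbol{\ell_n}(\B;E,F^\times)}=\sup_{\|x_i\|_E\le 1}\ \sup_{\|\beta\|_F\le 1}\ \Big|\sum_k \alpha(k)\beta(k)x_1(k)\cdots x_n(k)\Big|,
\]
an identity to be read in $[0,+\infty]$: its right-hand side is finite precisely when $T_\alpha$ maps $E^n$ into $F^\times$ and is bounded, that is, precisely when $\alpha\in\boldsymbol{\ell_n}(\B;E,F^\times)$ (membership of $\alpha\cdot x_1\cdots x_n$ in $F^\times$ being exactly a finiteness condition on the inner supremum, by \cite[Lemma 2.8]{BeSha88}).

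Next I would unwind the right-hand side. By the definition of the space of multipliers (recall $\M(\cdot\, ;\cdot)=\boldsymbol{\ell_1}(\B;\cdot\, ,\cdot)$) one has $\|\alpha\|_{\M(F,\boldsymbol{\ell_n}(\B;E))}=\sup_{\|\beta\|_F\le 1}\|\alpha\cdot\beta\|_{\boldsymbol{\ell_n}(\B;E)}$, and for fixed $\beta$ the quantity $\|\alpha\cdot\beta\|_{\boldsymbol{\ell_n}(\B;E)}$ is the norm of the diagonal $n$-linear form $\phi_{\alpha\cdot\beta}$, namely $\sup_{\|x_i\|_E\le 1}\big|\sum_k \alpha(k)\beta(k)x_1(k)\cdots x_n(k)\big|$. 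Hence
\[
\|\alpha\|_{\M(F,\boldsymbol{\ell_n}(\B;E))}=\sup_{\|\beta\|_F\le 1}\ \sup_{\|x_i\|_E\le 1}\ \Big|\sum_k \alpha(k)\beta(k)x_1(k)\cdots x_n(k)\Big|,
\]
again with the right-hand side finite exactly when $\alpha$ lies in this space. Interchanging the order of the two suprema in the two displays then yields the isometric equality together with the coincidence of the underlying sets. The ``in particular'' follows at once by applying the identity just proved with $F^\times$ in place of $F$, which gives $\boldsymbol{\ell_n}(\B;E,F^{\times\times})\overset 1 =\M(F^\times,\boldsymbol{\ell_n}(\B;E))$, and recalling that a maximal K\"othe sequence space satisfies $F\overset 1 = F^{\times\times}$.

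I do not expect a real obstacle: the substantive step is the harmless interchange of the two suprema, and the only thing needing attention is the routine bookkeeping of well-definedness and convergence — that $\alpha\cdot x_1\cdots x_n$ genuinely belongs to $F^\times$, that the series defining $\phi_{\alpha\cdot\beta}$ converges, and that restricting to finitely supported $x_i$ (equivalently, to truncations) computes the correct norms — all handled by the normal property of K\"othe spaces together with the elementary facts about K\"othe duals collected in Section~\ref{preliminares}. As a consistency check, the inclusion $\boldsymbol{\ell_n}(\B;E,F^\times)\inc\M(F,\boldsymbol{\ell_n}(\B;E))$ with norm $\le 1$ is already a special case of Proposition~\ref{prop multiplicadores} (applied to $F^\times$ instead of $F$, combined with the norm-one inclusion $F\inc F^{\times\times}$), so only the reverse inclusion and the reverse norm inequality need the computation above.
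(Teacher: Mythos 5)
Your argument is correct and is essentially the paper's proof: the authors obtain the inequality $\le$ from Proposition \ref{prop multiplicadores} (composing $T_\alpha$ with the functionals $\varphi_\beta$, $\beta\in B_F$) and the reverse inequality from the estimate $|\sum_k\alpha(k)\beta(k)x_1(k)\cdots x_n(k)|\le\|\phi_{\alpha\cdot\beta}\|_{\B(^nE)}\|x_1\|_E\cdots\|x_n\|_E$, which is exactly your interchange of the two suprema written as two one-sided bounds, and they conclude the ``in particular'' just as you do via $F\overset 1=F^{\times\times}$ for maximal $F$. The only point needing the care you already flag is that the iterated suprema must be read with the series genuinely convergent (handled by normality, unimodular multipliers and \cite[Lemma 2.8]{BeSha88}), so there is no gap.
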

\begin{proof}
By Proposition \ref{prop multiplicadores}, $\boldsymbol{\ell_n}(\B;E,F^\times)\overset{\leq1}\hookrightarrow \M(F^{\times\times},\boldsymbol{\ell_n}(\B;E))\overset{\leq1} {\hookrightarrow} \M(F,\boldsymbol{\ell_n}(\B;E)).$
Conversely, let $\alpha\in \M(F,\boldsymbol{\ell_n}(\B,E))$. For any $x_1, \dots, x_n\in E$ and $\beta\in F$, we have
$$
\left|\sum_{k\in\N} \alpha(k)\cdot x_1(k)\cdots x_n(k)\cdot\beta(k)\right|=\left|\phi_{\alpha\cdot\beta}(x_1,\dots x_n)\right|
 \leq \|\phi_{\alpha\cdot\beta}\|_{\B(^nE)}\cdot\|x_1\|_E\cdots\|x_n\|_E.
$$
So, $T_\alpha\in\B(^nE;F^\times)$ and

\noindent$\displaystyle{\|\alpha\|_{\boldsymbol{\ell_n}(\B;E,F^\times)} =  \|T_\alpha\|_{\B(^nE;F^\times)} \le\sup_{\beta\in B_{F}} \|\phi_{\alpha\cdot\beta}\|_{\B(^nE)}
                                =  \sup_{\beta\in B_F}\|\alpha\cdot\beta\|_{\boldsymbol{\ell_n}(\B;E)} = \|\alpha\|_{\M(F,\boldsymbol{\ell_n}(\B;E))}}.
$

Last, if $F$ is maximal, then $\boldsymbol{\ell_n}(\B;E,F)\overset 1 =\boldsymbol{\ell_n}(\B;E,F^{\times\times})\overset 1 = \M(F^\times,\boldsymbol{\ell_n}(\B;E)).$
\end{proof}
 Note that if $F$ is not maximal, the equality $\boldsymbol{\ell_n}(\B;E,F)\overset 1 = \M(F^\times,\boldsymbol{\ell_n}(\B;E))$ might not be true. For instance, take $E=\ell_\infty$ and $F=c_0$, then $\boldsymbol{\ell_n}(\B;\ell_\infty,c_0)=c_0\neq \M(\ell_1;\boldsymbol{\ell_n}(\B;\ell_\infty))=\M(\ell_1,\ell_1)=\ell_\infty.$


\begin{corollary}
   Let $E$ and $F$ be K\"{o}the sequence spaces. Then,
   \begin{center}
     $\boldsymbol{\ell_n}(\I;E,F^{\times})\overset 1 = \boldsymbol{\ell_1}\left(\I;F,\boldsymbol{\ell_n}(\I;E)\right).$
   \end{center}
In particular, if $F$ is maximal, $\boldsymbol{\ell_n}(\I;E,F)\overset 1 = \boldsymbol{\ell_1}\left(\I;F^\times,\boldsymbol{\ell_n}(\I;E)\right).$
\end{corollary}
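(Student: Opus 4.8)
The plan is to pass to K\"othe duals and thereby reduce the statement to the continuous-multilinear case already established in Proposition \ref{ln-multiplicadores}, exploiting that $\I$ is maximal and that its adjoint ideal is $\B$.

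First I would observe that both sides of the claimed identity are maximal K\"othe sequence spaces, so that it suffices to show their K\"othe duals coincide isometrically (a maximal K\"othe sequence space being K\"othe reflexive). Indeed, $\I$ is maximal and $F^\times$ is maximal, so Proposition \ref{max}(i), applied with $F^\times$ in the role of $F$ and using $F^{\times\times\times}\overset 1 = F^\times$, shows that $\boldsymbol{\ell_n}(\I;E,F^\times)$ is maximal; similarly $\boldsymbol{\ell_n}(\I;E)=\boldsymbol{\ell_n}(\I;E,\C)$ is maximal (take $F=\C$), and hence so is $\boldsymbol{\ell_1}(\I;F,\boldsymbol{\ell_n}(\I;E))$.

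Next I would compute the two K\"othe duals. For the left-hand side, Proposition \ref{dual_kothe_adjunto} together with $\I^*=\B$ gives $\boldsymbol{\ell_n}(\I;E,F^\times)^\times\overset 1 =\boldsymbol{\ell_n}(\B;E^\times,F^{\times\times})$, and Proposition \ref{ln-multiplicadores}, applied with $E^\times$ and $F^\times$ in the roles of $E$ and $F$ (note $F^{\times\times}=(F^\times)^\times$), turns this into $\M\big(F^\times,\boldsymbol{\ell_n}(\B;E^\times)\big)$. For the right-hand side, Proposition \ref{dual_kothe_adjunto} with $n=1$ and $\I^*=\B$ gives $\boldsymbol{\ell_1}(\I;F,\boldsymbol{\ell_n}(\I;E))^\times\overset 1 =\boldsymbol{\ell_1}(\B;F^\times,\boldsymbol{\ell_n}(\I;E)^\times)=\M\big(F^\times,\boldsymbol{\ell_n}(\I;E)^\times\big)$, while the scalar case of Proposition \ref{dual_kothe_adjunto} yields $\boldsymbol{\ell_n}(\I;E)^\times\overset 1 =\boldsymbol{\ell_n}(\B;E^\times)$. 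Thus both K\"othe duals are isometrically equal to $\M\big(F^\times,\boldsymbol{\ell_n}(\B;E^\times)\big)$, and taking K\"othe duals once more proves the first identity. The ``in particular'' statement then follows by applying this identity with $F^\times$ in place of $F$ and using $F^{\times\times}\overset 1 = F$, which holds since $F$ is maximal.

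The argument is essentially bookkeeping, so the only delicate point is the repeated interchange of $E,E^\times,E^{\times\times}$ and $F,F^\times,F^{\times\times}$; the identifications close up precisely because $F^\times$ is maximal (so $F^{\times\times\times}\overset 1 = F^\times$) and because every associated sequence space appearing above is maximal — this last fact being exactly what licenses passing from equality of K\"othe duals back to equality of the spaces.
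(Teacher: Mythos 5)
Your proposal is correct and relies on exactly the same ingredients as the paper's proof ($\I^*\iso\B$, $\B^*\iso\I$, Propositions \ref{dual_kothe_adjunto}, \ref{ln-multiplicadores} and \ref{max}, and the scalar-valued identities of \cite{CDS09}); the paper simply arranges them as one chain of isometries starting from $\boldsymbol{\ell_n}(\I;E,F^\times)\overset 1 =\boldsymbol{\ell_n}(\B;E^\times,F^{\times\times})^\times$, whereas you dualize both sides separately, meet at $\M\big(F^\times,\boldsymbol{\ell_n}(\B;E^\times)\big)$, and undualize using that maximal spaces are K\"othe reflexive. The only cosmetic imprecision is that the scalar-valued steps you invoke (maximality of $\boldsymbol{\ell_n}(\I;E)$ and $\boldsymbol{\ell_n}(\I;E)^\times\overset 1 =\boldsymbol{\ell_n}(\B;E^\times)$) are not literal instances of Propositions \ref{max} and \ref{dual_kothe_adjunto}, since $\C$ is not a K\"othe sequence space in the paper's sense; they are precisely \cite[Prop.~5.5 and 5.6]{CDS09}, which the paper cites for the same purpose in its last two equalities.
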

\begin{proof}
Being $\I^*\iso\B$ and $\B^*\iso \I$, we have
\begin{eqnarray*}
  \en(\I;E,F^\times)&\underset{\eqref{ln adjunto maximales}}{\overset 1 =}&\en(\B;E^\times,F^{\times\times})^\times\underset{\textrm{Prop} \ \ref{ln-multiplicadores}}{\overset 1 =}\boldsymbol{\ell_1}\left(\B;F^\times,\en(\B;E^\times)\right)^\times\\
  &\underset{\textrm{Prop} \ \ref{dual_kothe_adjunto}}{\overset 1 =}&\boldsymbol{\ell_1}\left(\I;F^{\times\times},\en(\B;E^\times)^\times\right)\underset{\textrm{Prop} \ \ref{max}}{\overset 1 =}\boldsymbol{\ell_1}\left(\I;F,\en(\B;E^\times)^\times\right)\\
  &{\overset 1 =}&\boldsymbol{\ell_1}\left(\I;F,\en(\I;E^{\times\times})\right)\ \ {\overset 1 =} \ \ \boldsymbol{\ell_1}\left(\I;F,\en(\I;E)\right),
\end{eqnarray*}
where the last two equalities hold by \cite[Prop 5.5, Prop 5.6]{CDS09}.
\end{proof}

Recall the definition of powers of sequence spaces. Let $E$ be a K\"{o}the sequence space and $0<r<\infty$ such that $\mathbf M^{(\max(1,r))}(E)=1$. Then, $E^r:=\{x\in\ell_\infty \ : \ |x|^{1/r}=(|x(k)|^{1/r})_{k\in\N}\in E \}$
endowed with the norm
  $\|x\|_{E^r}:=\|\ |x|^{1/r}\ \|_E^r$ results a K\"{o}the sequence space which is $\frac{1}{\min(1,r)}$-convex.
And, the sequence space $E^r$ is maximal if $E$ is maximal.

Observe that since $E$ is normal, we can use $x^{1/r}$ instead of $|x|^{1/r}$ in the definition of $E^r$ and its norm.
\begin{remark}\label{norma producto}
Whenever $x_1,\dots,x_n\in B_E$, then  $(x_1\cdots x_n)^{1/n}\in B_E$. Indeed, we have that $\displaystyle{ \big\|(|x_1\cdots x_n|)^{1/n}\big\|_E\leq \Big\|\frac{|x_1|+\cdots +|x_n|}{n}\Big\|_E \leq \frac{\|x_1\|_E+\cdots+\|x_n\|_E}{n}\leq1}$. In particular, for  an $n$-convex K\"othe sequence space $E$, if $x_1,\dots,x_n\in B_E$, then $x_1\cdots x_n\in B_{E^n}$.
\end{remark}

 In the case that $E$ is $n$-convex, there is an alternative description of $\boldsymbol{\ell_n}(\B;E,F)$ as a space of multipliers:

\begin{proposition}\label{lnconvexo}
   Let $E$ and $F$ be an  K\"{o}the sequence spaces such that $E$ is $n$-convex with $\mathbf M^{(n)}(E)=1$. Then, $\boldsymbol{\ell_n}(\B;E,F)\overset 1 = \M(E^n,F).$
\end{proposition}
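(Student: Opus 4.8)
The plan is to establish the two norm-one inclusions $\boldsymbol{\ell_n}(\B;E,F)\inc \M(E^n,F)$ and $\M(E^n,F)\inc \boldsymbol{\ell_n}(\B;E,F)$ separately, each by a direct estimate on diagonal maps. For the first, let $\alpha\in\boldsymbol{\ell_n}(\B;E,F)$, so $T_\alpha\in\B(^nE;F)$ with $\|T_\alpha\|=\|\alpha\|_{\boldsymbol{\ell_n}(\B;E,F)}$. Given $y\in B_{E^n}$, I would like to view $\alpha\cdot y$ as the value of $T_\alpha$ at a suitable $n$-tuple of unit vectors of $E$. Since $\mathbf M^{(n)}(E)=1$, writing $x=y^{1/n}$ (legitimate by normality, as observed before Remark~\ref{norma producto}) one has $x\in B_E$, hence $T_\alpha(x,\dots,x)=\alpha\cdot x^n=\alpha\cdot y\in F$ and $\|\alpha\cdot y\|_F\le\|T_\alpha\|$. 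Taking the supremum over $y\in B_{E^n}$ shows $\alpha\in\M(E^n,F)$ with $\|\alpha\|_{\M(E^n,F)}\le\|\alpha\|_{\boldsymbol{\ell_n}(\B;E,F)}$.

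For the reverse inclusion, let $\alpha\in\M(E^n,F)$; I must show $T_\alpha$ is a bounded $n$-linear map $E\times\dots\times E\to F$ with norm at most $\|\alpha\|_{\M(E^n,F)}$. First one checks that $T_\alpha$ really maps into $F$: for $x_1,\dots,x_n\in E$ the product $x_1\cdots x_n$ lies in $E^n$ — indeed, by normality we may assume $x_i\ge 0$, rescale so $x_i\in B_E$, and then Remark~\ref{norma producto} (the case of an $n$-convex space) gives $x_1\cdots x_n\in B_{E^n}$ with $\|x_1\cdots x_n\|_{E^n}\le\|x_1\|_E\cdots\|x_n\|_E$. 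Since $\alpha\in\M(E^n,F)$, the coordinatewise product $\alpha\cdot(x_1\cdots x_n)$ belongs to $F$, i.e. $T_\alpha(x_1,\dots,x_n)\in F$, and
\[
\|T_\alpha(x_1,\dots,x_n)\|_F=\|\alpha\cdot(x_1\cdots x_n)\|_F\le\|\alpha\|_{\M(E^n,F)}\,\|x_1\cdots x_n\|_{E^n}\le\|\alpha\|_{\M(E^n,F)}\,\|x_1\|_E\cdots\|x_n\|_E.
\]
Hence $T_\alpha\in\B(^nE;F)$ and $\|\alpha\|_{\boldsymbol{\ell_n}(\B;E,F)}=\|T_\alpha\|\le\|\alpha\|_{\M(E^n,F)}$. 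Combining the two estimates yields the isometric identity.

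The one point needing care — the potential obstacle — is the passage from the product estimate $\|x_1\cdots x_n\|_{E^n}\le\|x_1\|_E\cdots\|x_n\|_E$ to the multiplier bound: this is exactly where $\mathbf M^{(n)}(E)=1$ is used (both in Remark~\ref{norma producto} to get the constant $1$, and in the first inclusion to guarantee $y^{1/n}\in B_E$ for $y\in B_{E^n}$). With the hypothesis $\mathbf M^{(n)}(E)=1$ in force, no constants are lost anywhere, so the identity is genuinely isometric rather than merely an isomorphism. If one only assumed $E$ to be $n$-convex with some constant $\kappa>1$, the same argument would still give a (non-isometric) topological identity $\boldsymbol{\ell_n}(\B;E,F)=\M(E^n,F)$ with equivalence constants controlled by $\kappa$.
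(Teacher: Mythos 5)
Your argument is correct and follows essentially the same route as the paper's proof: one direction via $T_\alpha(x^{1/n},\dots,x^{1/n})=\alpha\cdot x$ for $x\in B_{E^n}$, the other via the bound $\|x_1\cdots x_n\|_{E^n}\le\|x_1\|_E\cdots\|x_n\|_E$ from Remark~\ref{norma producto}. The closing observation about the non-isometric case for $\kappa>1$ is a harmless extra not present in the paper.
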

\begin{proof}
Let $\alpha\in\boldsymbol{\ell_n}(\B;E,F)$ and take $x\in E^n$. Then $x^{1/n}\in E$ and $T_\alpha(x^{1/n},\dots,x^{1/n})=\alpha\cdot x^{1/n}\cdots x^{1/n}=\alpha\cdot x \in F.$
 Thus, $\alpha\in \M(E^n,F)$ and
\begin{align*}
  \|\alpha\|_{\M(E^n,F)}= & \sup_{x\in B_{E^n}} \|\alpha\cdot x\|_F=\sup_{x\in B_{E^n}}\|T_\alpha(x^{1/n},\dots,x^{1/n})\|_F
   \leq  \sup_{x\in B_{E^n}}\|T_\alpha\|_{\B(^nE;F)}\cdot\|x^{1/n}\|^n_E \\
   = & \sup_{x\in B_{E^n}}\|\alpha\|_{\en(\B;E,F)}\cdot\|x\|_{E^n}=\|\alpha\|_{\en(\B;E,F)}.
\end{align*}

Conversely, let $\alpha\in \M(E^n,F)$. Then,   $T_\alpha(x_1,\dots,x_n)=\alpha\cdot x_1\cdots x_n\in F$, for all $x_1,\dots, x_n\in E$.
In consequence, $T_\alpha$ is well defined from $E\times\cdots\times E$ to $F$ and
\begin{center}
  $\displaystyle{\|T_\alpha\|_{\B(^nE;F)} =\sup_{x_i\in B_E}\|\alpha\cdot x_1\cdots x_n\|_F
      \leq  \sup_{x\in B_{E^n}} \|\alpha\cdot x\|_F=\|\alpha\|_{\M(E^n,F)}}.$
\end{center}
\end{proof}

As a direct consequence of Propositions \ref{ln-multiplicadores} and \ref{lnconvexo} we have:

\begin{corollary}
 Let $E$ and $F$ be an  K\"{o}the sequence spaces such that $E$ is $n$-convex with $\mathbf M^{(n)}(E)=1$. Then,  $\M(E^n,F^\times)\overset 1 = \M(F,\boldsymbol{\ell_n}(\B;E)).$
\end{corollary}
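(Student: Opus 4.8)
The plan is to chain together the two equalities that immediately precede this corollary. Recall that Proposition~\ref{ln-multiplicadores} gives $\boldsymbol{\ell_n}(\B;E,F^{\times})\overset 1 = \M(F,\boldsymbol{\ell_n}(\B;E))$ for arbitrary K\"othe sequence spaces $E,F$, and Proposition~\ref{lnconvexo} gives $\boldsymbol{\ell_n}(\B;E,G)\overset 1 = \M(E^n,G)$ whenever $E$ is $n$-convex with $\mathbf M^{(n)}(E)=1$. The idea is simply to apply the second identity with $G=F^{\times}$, obtaining $\boldsymbol{\ell_n}(\B;E,F^{\times})\overset 1 = \M(E^n,F^{\times})$, and then combine it with the first.

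First I would check that the hypotheses of both propositions are met: the standing assumption of the corollary is exactly that $E$ is $n$-convex with $\mathbf M^{(n)}(E)=1$, which is what Proposition~\ref{lnconvexo} requires; Proposition~\ref{ln-multiplicadores} has no extra hypothesis beyond $E,F$ being K\"othe sequence spaces, and $F^{\times}$ is always a K\"othe sequence space, so nothing needs to be verified there. Then I would write
\[
\M(E^n,F^\times)\ \overset{1}{=}\ \boldsymbol{\ell_n}(\B;E,F^{\times})\ \overset{1}{=}\ \M(F,\boldsymbol{\ell_n}(\B;E)),
\]
where the first isometry is Proposition~\ref{lnconvexo} applied to the target space $F^{\times}$ and the second is Proposition~\ref{ln-multiplicadores}. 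Composing the two isometric identifications yields $\M(E^n,F^\times)\overset{1}{=}\M(F,\boldsymbol{\ell_n}(\B;E))$, which is the assertion.

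There is essentially no obstacle here: the statement is a formal corollary, and the only mild point of care is making sure one invokes Proposition~\ref{lnconvexo} with the correct second argument (namely $F^\times$, not $F$), since it is the $\M(E^n,\cdot)$ description rather than the $\M(\cdot,\boldsymbol{\ell_n})$ description that must be matched up with Proposition~\ref{ln-multiplicadores}. One could also remark that the $n$-convexity hypothesis is used only through Proposition~\ref{lnconvexo}; Proposition~\ref{ln-multiplicadores} would by itself give $\boldsymbol{\ell_n}(\B;E,F^\times)\overset 1 =\M(F,\boldsymbol{\ell_n}(\B;E))$ for any $E$, but without $n$-convexity one has no handle on the left-hand side in terms of $E^n$.
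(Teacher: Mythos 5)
Your proof is correct and is exactly the argument the paper intends: the corollary is stated there as a direct consequence of Propositions~\ref{ln-multiplicadores} and~\ref{lnconvexo}, obtained by applying the latter with target space $F^{\times}$ and chaining the two isometric identities through $\boldsymbol{\ell_n}(\B;E,F^{\times})$. Your remarks on where the $n$-convexity hypothesis enters match the roles these propositions play in the paper.
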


Recall the definition of  Lorentz sequence spaces.
For each element $x\in E$ its \emph{decreasing rearrangement} $(x^\star(k))_{k\in\N}$ is given by
$\displaystyle{x^\star(k):=\inf \Big\{ \sup_{j\in\N\backslash J} |x(j)| \  : \ J\subseteq\N,\  card(J)<k \Big\}.}$
Let $(w(k))_{k=1}^{\infty}$ be a decreasing
sequence of positive numbers with $w(1)=1$, $w(k)$ tends to zero
and $\sum_{k=1}^{\infty} w(k)=\infty$ and let $1\leq p<\infty$.\
Then the corresponding Lorentz sequence space, denoted by $d(w,p)$
is defined as the set of all sequences $(x(k))_{k}$ such that
\[
\| x \| = \sup_{\sigma \in \Sigma_{\mathbb{N}}} \bigg(
\sum_{k=1}^{\infty} |x({\sigma(k))}|^{p}\cdot w(k) \bigg)^{1/p} =
\bigg( \sum_{k=1}^{\infty} |x^{\star}(k)|^{p}\cdot w(k) \bigg)^{1/p}
<\infty,
\]
where $\Sigma_{\mathbb{N}}$ denotes the group of permutations of the natural numbers.

The sequence $w$ is said to be $\alpha$-regular ($0< \alpha < \infty$) if $w(k)^{\alpha} \asymp
\frac{1}{k} \sum_{j=1}^{k} w(j)^{\alpha}$ and regular if it is $\alpha$-regular
for some $\alpha$.
In \cite{Rei81} it can be found that the K\"{o}the sequence space $d(w,p)$ is  $r$-convex (with
$\mbox{\bf M}^{(r)} \left(d(w,p)\right) =1$) whenever  $1 \leq r \leq p$.
In \cite{Gar69} and \cite{LTI77} a description of $d(w,p)'$, the dual of $d(w,p)$,
is given.
In the case that $w$ is regular, an easier description of $d(w,p)'$ with $p>1$ is given in \cite{All78,Rei82}.
Let us recall also that, given a strictly positive, increasing sequence
$\Psi$ such that $\Psi(0)=0$, the associated Mar\-cinkiewicz
sequence space $m_\Psi$ (see \cite[Definition 4.1]{KaLee06}, or
\cite{ChHa06,KaLee04}) consists of all sequences $(x(k))_{k}$ such
that
$\displaystyle{\Vert x \Vert_{m_{\Psi}} = \sup_{N} \frac{\sum_{k=1}^{N}
x^{\star}(k)} {\Psi (N)} < \infty .}$

\smallskip

The results of the previous section combined with the scalar-valued case for Lorentz spaces studied  in \cite[Section 5]{CDS09} allow us to give a description of diagonal multilinear mappings from Lorentz sequence spaces (or their duals).

Proposition \ref{ln-multiplicadores} along with  \cite{CDS09} produce
\[\boldsymbol{\ell_n}(\B;d(w,p),F^\times)\overset 1 =\M(F,\boldsymbol{\ell_n}(\B;d(w,p)))\overset 1 =\left\{
                                                                           \begin{array}{ll}
                                                                             \M(F,d(w,p/n)^\times) & \hbox{if $n\leq p$;} \\
                                                                             \M(F,m_\Psi) & \hbox{if $n>p$},
                                                                           \end{array}
                                                                   \right.\]
 where $\Psi(N)=\left(\sum_{k=1}^Nw(k)\right)^{n/p}$. Moreover, $\boldsymbol{\ell_n}(\B;d(w,p),F^\times)=\M(F,\ell_\infty)=\ell_\infty$ if $n>p$ and $w$ is $\frac{n}{n-p}$-regular. For $n\leq p$, since $d(w,p)$ is $n$-convex with $\mathbf M^{(n)}(d(w,p))=1$,  Proposition~\ref{lnconvexo} gives an alternative description: $\boldsymbol{\ell_n}(\B;d(w,p),F)=\M(d(w,p)^n,F)=\M(d(w,p/n),F)$.
Proposition \ref{ln-multiplicadores} combined with some results of \cite{CDS09}, also imply
\[\boldsymbol{\ell_n}(\B;d(w,p)^\times,F^\times)\overset 1 =\M(F,\boldsymbol{\ell_n}(\B;d(w,p)^\times))\overset 1 =\left\{
                                                                           \begin{array}{ll}
                                                                             \M(F,\ell_\infty)=\ell_\infty & \hbox{if $n'\leq p$;} \\
                                                                             \M(F,d(w^{\frac{n'}{n'-p}},\frac{p'}{p'-n})) & \hbox{if $1<p<n'$}\\
                                                                             \M(F,d(w^n,1)) & \hbox{if $p=1$}
                                                                           \end{array}
                                                                   \right.\]
To complete this description it remains to calculate the space of multipliers from $F$ to a Lorentz sequence space. We can give an explicit characterization when $F=\eq$.
We affirm that
\[\M\left(\ell_q,d(w,p)\right)=\left\{
                     \begin{array}{ll}
                       d\left(w^{\frac{q}{q-p}},\frac{pq}{q-p}\right) & \hbox{si $p<q$;} \\
                       \ \ \ \  \ell_\infty & \hbox{si $p\geq q$.}
                     \end{array}
                   \right.\]

\noindent Indeed, when $p\geq q$, the equality is clear from the inclusions $\ell_q\subseteq\ell_p\subseteq d(w,p)$.

When $p<q$,
\begin{eqnarray*}
 \|\alpha\|_{\M\left(\ell_q,d(w,p)\right)} & = & \sup_{\beta\in B_{\ell_q}}\|\alpha \cdot\beta\|_{d(w,p)}= \sup_{\beta\in B_{\ell_q}}\sup_{\sigma\in\Sigma_\N} \left( \sum_{k\in\N}|\alpha_\sigma(k)|^p\cdot|\beta_\sigma(k)|^p\cdot w(k)\right)^{1/p}\\
& = & \sup_{\sigma\in\Sigma_\N} \sup_{\gamma\in B_{\ell_{\frac q p}}} \left( \sum_{k\in\N}|\alpha_\sigma(k)|^p\cdot|\gamma(k)|\cdot w(k)\right)^{1/p}= \sup_{\sigma\in\Sigma_\N} \left( \|\alpha_\sigma^p\cdot w\|_{(\frac q p)'}\right)^{1/p}\\
 & = & \sup_{\sigma\in\Sigma_\N} \left( \sum_{k\in\N}|\alpha_\sigma(k)|^{p(\frac q p)'}\cdot w(k)^{(\frac q p)'}\right)^{\frac 1 {p(\frac q p)'}}= \|\alpha\|_{d\left(w^{\frac{q}{q-p}},\frac{pq}{q-p}\right)}.
\end{eqnarray*}

 We can obtain, applying Theorem \ref{dual_kothe_adjunto} and taking into account that $\I^*=\B$, similar results for the ideal of integral multilinear operators.

\section{(E,p)-summing multilinear operators}

The classical notion of $(q,p)$-summing operator has a natural extension by changing the index $q$ (which refers to the space $\ell_q$) by any other K\"othe sequence space $E$ containing $\ell_p$. This yields the concept of $(E,p)$-summing linear mapping. This class, denoted by $\Pi_{(E,p)}$, was studied in \cite{DMM02} where typical results about $(q,p)$-summing operators are extended to the case of $(E,p)$-summing linear mappings by means of the space of multipliers. Here, we propose an $n$-linear version of that program.

Along this section we consider $1\leq p<\infty$. If $\ell_p\hookrightarrow E$, we denote by  $c_p^E=\|i:\ell_p\hookrightarrow E\|$  the norm of the natural inclusion map. We need to recall also that for a K\"othe sequence space $E$, the $1/n$-convexification, $E^{1/n}$, is always well defined and it is an $n$-convex K\"othe sequence space with $\mathbf M^{(n)}(E^{1/n})=1$. Now, we can proceed to the definition.

 Let $E$ be a K\"{o}the sequence space such that $\ep\hookrightarrow E^{1/n}$ and let $X_1,\dots,X_n,Y$ be Banach spaces. An $n$-linear operator $T\in\B(X_1,\dots,X_n;Y)$ is called \emph{$(E,p)$-summing}  if there exists $C>0$ such that for every finite sequences $x_1=(x_{1,i})_{i=1}^m\subseteq X_1$,\dots, $x_n=(x_{n,i})_{i=1}^m\subseteq X_n$ it holds
\begin{eqnarray}\label{(E,p)-dominado}
  \bigg\|\big(\left\|T(x_{1,i},\dots,x_{n,i})\right\|_Y\big)_{i=1}^m\bigg\|_{E}\leq C\cdot c_p^{E^{1/n}}\cdot w_p(x_1)\cdots w_p(x_n),
\end{eqnarray}
where $w_p(x)=\sup_{x'\in B_{E'}}\big(\sum_{i=1}^m |\langle x',x_i\rangle|^p\big)^{1/p}$ is the weak $\ep$-norm.
 The space of $(E,p)$-summing $n$-linear operators from $X_1\times\cdots\times X_n$ to $Y$ is denoted by $\Pi_{(E,p)}(X_1,\dots,X_n;Y)$. It is a Banach space endowed with the norm $\pi_{(E,p)}(T)=\inf\{C>0 \ : \ T \textrm{\ verifies \ } \eqref{(E,p)-dominado}\}$. Moreover, it is easy to see that $\Pi_{(E,p)}$ is a Banach ideal of $n$-linear operators (always  under the  condition $\ep\hookrightarrow E^{1/n}$).

When $E=\ell_q$ and $p\le nq$, the ideal $\Pi_{(E,p)}$ is the class of $(q,p)$-summing $n$-linear mappings   $\Pi_{(q,p)}$ introduced and studied by \cite{Mat93}. For $p=nq$, $(q,p)$-summing $n$-linear mappings are the so called $p$-dominated $n$-linear mappings $\mathcal D_p$ \cite{Mat93,sch91,MT99}.
 In the case $n=1$, the class $\Pi_{(E,p)}$ is the usual ideal of $(E,p)$-summing linear operators mentioned above. When $E=\ep$ this is just the classical ideal of absolutely $p$-summing linear mappings, $\Pi_p$, with $\pi_p(\cdot)$ as the usual notation for its norm.

 In the sequel we present $n$-linear versions of some results in \cite{DMM02} along with a relationship between the sequence space associated to the ideal $\Pi_{(E,p)}$ and a linear relative.

 We begin by the $n$-linear version of  \cite[Lemma 3.3]{DMM02}. It is a standard characterization of $(E,p)$-summability with a straightforward proof that we omit.

 \begin{lemma} \label{eq (E,p)}
Let $E$ be a K\"{o}the sequence space such that $\ep\hookrightarrow E^{1/n}$ and let $X_1,\dots,X_n,Y$ be Banach spaces. For a mapping $T\in\B(X_1,\dots,X_n;Y)$  and a constant $C\ge 0$ the following are equivalent:
  \begin{itemize}
    \item[(1)] $T\in\Pi_{(E,p)}(X_1,\dots,X_n;Y)$  with $\pi_{(E,p)}(T)\leq C$.
    \item[(2)] $\pi_{(E,p)}\left(T\circ(A_1,\dots,A_n)\right)\leq C$ for all $m\in\N$ and for all $A_j\in \B(\ell_{p'}^m;X_j)$ with $\|A_j\|\leq 1$. (Here $\ell_{p'}^m$ means the space $\C^m$ with the $\ell_{p'}$-norm, not to be confused with the $m$-power of the sequence space $\ell_{p'}$.)
  \end{itemize}

In particular, in this case,
$$
\pi_{(E,p)}(T)=\sup_m\left\{\pi_{(E,p)}\left(T\circ(A_1,\dots,A_n)\right): \|A_j\|_{\B(\ell_{p'}^m;X_j)}\leq 1, \textrm{ for } j=1,\dots,n\right\}.
$$
\end{lemma}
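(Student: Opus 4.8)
The plan is to prove the two implications separately and read off the final ``in particular'' identity from them.

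The implication $(1)\Rightarrow(2)$ requires nothing beyond the fact that $\Pi_{(E,p)}$ is a Banach ideal: for $A_j\in\B(\ell_{p'}^m;X_j)$ with $\|A_j\|\le 1$ one has $T\circ(A_1,\dots,A_n)\in\Pi_{(E,p)}(\ell_{p'}^m,\dots,\ell_{p'}^m;Y)$ and $\pi_{(E,p)}\big(T\circ(A_1,\dots,A_n)\big)\le\pi_{(E,p)}(T)\,\|A_1\|\cdots\|A_n\|\le C$.

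For $(2)\Rightarrow(1)$ I would start from arbitrary finite sequences $(x_{1,i})_{i=1}^m\subseteq X_1,\dots,(x_{n,i})_{i=1}^m\subseteq X_n$ and verify \eqref{(E,p)-dominado} with constant $C$. If some $w_p(x_j)=0$ then every $x_{j,i}=0$ and both sides of \eqref{(E,p)-dominado} vanish, so I may assume $w_p(x_j)>0$ for all $j$ and define $A_j\colon\ell_{p'}^m\to X_j$ by $A_j(e_i)=x_{j,i}/w_p(x_j)$. The only fact that needs recalling is the standard identity $\|A\|=w_p\big((Ae_i)_{i=1}^m\big)$ valid for any coordinate operator $A\colon\ell_{p'}^m\to X$ --- it is just the duality $(\ell_{p'}^m)'=\ell_p^m$ unwound --- so that $\|A_j\|=1$ and hypothesis (2) applies to this choice of $A_1,\dots,A_n$. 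Evaluating the defining inequality \eqref{(E,p)-dominado} for $T\circ(A_1,\dots,A_n)$ on the finite sequence $(e_i)_{i=1}^m$ placed in every slot, and using $w_p\big((e_i)_{i=1}^m\big)=1$ in $\ell_{p'}^m$ together with
$$
T\circ(A_1,\dots,A_n)(e_i,\dots,e_i)=\frac{T(x_{1,i},\dots,x_{n,i})}{w_p(x_1)\cdots w_p(x_n)},
$$
one obtains, after pulling the scalar out of the $E$-norm,
$$
\frac{\big\|\big(\|T(x_{1,i},\dots,x_{n,i})\|_Y\big)_{i=1}^m\big\|_E}{w_p(x_1)\cdots w_p(x_n)}\ \le\ \pi_{(E,p)}\big(T\circ(A_1,\dots,A_n)\big)\cdot c_p^{E^{1/n}}\ \le\ C\cdot c_p^{E^{1/n}},
$$
which rearranges to \eqref{(E,p)-dominado} for $T$; hence $\pi_{(E,p)}(T)\le C$. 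The ``in particular'' equality then follows by combining the two implications: $(1)\Rightarrow(2)$ bounds the supremum over all $m$ and all $\|A_j\|\le 1$ by $\pi_{(E,p)}(T)$, while $(2)\Rightarrow(1)$ applied with $C$ equal to that supremum gives the reverse estimate.

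I do not anticipate a real obstacle: the entire content is the coordinate-operator identity $\|A\|=w_p\big((Ae_i)_i\big)$ and careful bookkeeping of the normalizing factors --- in particular that $c_p^{E^{1/n}}$ depends only on $E$ and $p$, not on the domain or target spaces, so it is unchanged when one passes to $T\circ(A_1,\dots,A_n)$, and that $w_p\big((e_i)_{i=1}^m\big)=1$ in $\ell_{p'}^m$. The only point worth noting is that $T\in\B(X_1,\dots,X_n;Y)$ is assumed from the start, so speaking of the supremum in (2) is legitimate.
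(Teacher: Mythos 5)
Your argument is correct. The paper itself gives no proof to compare against --- it dismisses the lemma as ``a standard characterization \ldots with a straightforward proof that we omit,'' being the $n$-linear version of \cite[Lemma 3.3]{DMM02} --- and what you have written is exactly that standard argument: the ideal property for $(1)\Rightarrow(2)$, and for $(2)\Rightarrow(1)$ the identity $\|A\|=w_p\big((Ae_i)_i\big)$ for coordinate operators on $\ell_{p'}^m$ together with $w_p\big((e_i)_{i=1}^m\big)=1$, with the normalizing factors handled correctly.
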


Next lemma enumerates two simple properties about a sequence space associated to an ideal that will be needed later.

\begin{lemma}
  Let $E$ be a K\"{o}the sequence space  with $\ep\hookrightarrow E^{1/n}$.
  \begin{enumerate}
 \item[(i)] If $q$  and $r$  are such that $p<q$ and $\frac 1 r=\frac 1 p-\frac 1 q$, then $\eq\hookrightarrow\left[\en(\B;\ell_r,E)\right]^{1/n}$.
\item[(ii)]  If $F$ and $G$ are K\"{o}the sequence spaces  and $G$ is $n$-convex with $\mathbf M^{(n)}(G)=1$, then $\boldsymbol{\ell_1}(\Pi_{(E^{1/n},p)};F,G)$ is $n$-convex with convexity constant  1.
  \end{enumerate}
\end{lemma}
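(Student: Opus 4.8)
The plan is to prove both items by unwinding the definitions and then performing an elementary estimate. For (i) the engine is the generalised H\"older inequality together with the standing hypothesis $\ep\hookrightarrow E^{1/n}$; for (ii) it is a twofold use of $n$-convexity — of $G$ (by hypothesis) and of $E^{1/n}$ (automatic) — combined with the monotonicity of K\"othe norms.

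\emph{Item (i).} I would start from a sequence $\gamma\in\eq$ and aim at $\gamma\in[\en(\B;\ell_r,E)]^{1/n}$ with $\|\gamma\|_{[\en(\B;\ell_r,E)]^{1/n}}\le c_p^{E^{1/n}}\|\gamma\|_{\eq}$; by the definition of the power space this is the same as the estimate $\|T_{|\gamma|^n}\|_{\B(^n\ell_r;E)}\le\big(c_p^{E^{1/n}}\big)^n\|\gamma\|_{\eq}^n$. So fix $x_1,\dots,x_n\in B_{\ell_r}$. By Remark~\ref{norma producto} the sequence $z:=|x_1\cdots x_n|^{1/n}$ lies in $B_{\ell_r}$, and then, since $\tfrac1p=\tfrac1q+\tfrac1r$, H\"older's inequality gives $y:=|\gamma|\cdot z\in\ep$ with $\|y\|_{\ep}\le\|\gamma\|_{\eq}$. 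On the other hand, the inclusion $\ep\hookrightarrow E^{1/n}$ says precisely that $\big\||w|^n\big\|_E\le\big(c_p^{E^{1/n}}\big)^n\|w\|_{\ep}^n$ for every $w\in\ep$; taking $w=y$ and observing that $|y|^n=|\gamma|^n\cdot|x_1\cdots x_n|$ one gets
\[\big\||\gamma|^n\cdot x_1\cdots x_n\big\|_E=\big\||y|^n\big\|_E\le\big(c_p^{E^{1/n}}\big)^n\|\gamma\|_{\eq}^n,\]
and a supremum over $x_1,\dots,x_n\in B_{\ell_r}$ completes the item. (If $r\ge n$ there is a shortcut: Proposition~\ref{lnconvexo} plus the standard rules for convexifications of multiplier spaces identify $[\en(\B;\ell_r,E)]^{1/n}$ with $\M(\ell_r,E^{1/n})$, whence the inclusion is immediate from $\ep\hookrightarrow E^{1/n}$; but since $\ell_r$ need not be $n$-convex, the direct H\"older argument above is the one that works in general.)

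\emph{Item (ii).} I would set $H:=\boldsymbol{\ell_1}(\Pi_{(E^{1/n},p)};F,G)$ — well defined because $\ep\hookrightarrow E^{1/n}$ — fix $\alpha_1,\dots,\alpha_N\in H$, and put $\beta:=\big(\sum_j|\alpha_j|^n\big)^{1/n}$; the goal is $\|\beta\|_H\le\big(\sum_j\|\alpha_j\|_H^n\big)^{1/n}$. Given an arbitrary finite family $(x_i)_{i=1}^m\subseteq F$, the pointwise identity $|\beta\cdot x_i|^n=\sum_j|\alpha_j\cdot x_i|^n$ and the $n$-convexity of $G$ (constant $1$) give $\|\beta\cdot x_i\|_G\le\big(\sum_j\|\alpha_j\cdot x_i\|_G^n\big)^{1/n}$ for each $i$; feeding this into the $E^{1/n}$-norm, first by monotonicity and then by the $n$-convexity of $E^{1/n}$ (constant $1$), yields
\[\Big\|\big(\|\beta\cdot x_i\|_G\big)_{i=1}^m\Big\|_{E^{1/n}}\le\Big(\sum_j\Big\|\big(\|\alpha_j\cdot x_i\|_G\big)_{i=1}^m\Big\|_{E^{1/n}}^n\Big)^{1/n}.\]
Since the $(E^{1/n},p)$-summing inequality for the diagonal operator $T_{\alpha_j}$ bounds each inner term by $\|\alpha_j\|_H\,c_p^{E^{1/n}}\,w_p\big((x_i)_i\big)$, we conclude $\big\|\big(\|\beta\cdot x_i\|_G\big)_i\big\|_{E^{1/n}}\le c_p^{E^{1/n}}\,w_p\big((x_i)_i\big)\,\big(\sum_j\|\alpha_j\|_H^n\big)^{1/n}$; as $(x_i)$ was arbitrary, $T_\beta\in\Pi_{(E^{1/n},p)}(F;G)$ with $\pi_{(E^{1/n},p)}(T_\beta)\le\big(\sum_j\|\alpha_j\|_H^n\big)^{1/n}$, i.e.\ $\|\beta\|_H\le\big(\sum_j\|\alpha_j\|_H^n\big)^{1/n}$, which is exactly $n$-convexity of $H$ with constant $1$.

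I do not expect a genuine obstacle: both proofs are essentially bookkeeping. The one thing that needs attention is keeping every convexity constant and every occurrence of $c_p^{E^{1/n}}$ exactly right, so that the constants that come out are equal to $1$ precisely where the statement requires it — the inclusion norm in (i) is allowed to be $c_p^{E^{1/n}}$, but the convexity constant in (ii) must be $1$, which forces one to use the $\mathbf{M}^{(n)}=1$ normalisations of $G$ and of $E^{1/n}$ rather than mere $n$-convexity.
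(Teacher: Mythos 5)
Your proof is correct and follows essentially the same route as the paper: in (i) the paper likewise reduces to $\alpha\cdot x_1^{1/n}\cdots x_n^{1/n}\in\ep\hookrightarrow E^{1/n}$ via H\"older (you merely make the constant $c_p^{E^{1/n}}$ explicit), and in (ii) the paper runs the identical chain of inequalities, invoking the triangle inequality in $E$ (``Minkowski'') where you invoke the $n$-convexity of $E^{1/n}$ with constant $1$ --- which is the same inequality after unwinding the power-space norm. No gaps.
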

\begin{proof}
(i) First, note that $\alpha\in\left[\en(\B;\ell_r,E)\right]^{1/n}$ if and only if $\left(\alpha\cdot x_1^{1/n}\cdots x_n^{1/n}\right)\in E^{1/n}$ for all $x_1,\dots,x_n\in\ell_r$. Now, for $\alpha\in\eq$ and $x_1,\dots, x_n\in\ell_r$, it is clear that $x_1^{1/n}\cdots x_n^{1/n}\in\ell_r$ and so $\alpha\cdot x_1^{1/n}\cdots x_n^{1/n}\in\ep\hookrightarrow E^{1/n}$.

\noindent(ii)
Let $\alpha_1,\dots,\alpha_N\in\boldsymbol{\ell_1}(\Pi_{(E^{1/n},p)};F,G)$, we have to show that
\[\left\|\bigg[\Big(\sum_{k=1}^N|\alpha_k(j)|^n\Big)^{1/n}\bigg]_{j=1}^\infty\right\|_{\ell_1(\Pi_{(E^{1/n},p)};F,G)}\leq \left(\sum_{k=1}^N\|\alpha_k\|_{\ell_1(\Pi_{(E^{1/n},p)};F,G)}^n\right)^{1/n}.\]
Equivalently, if we call $\beta(j)=\left(\sum_{k=1}^N|\alpha_k(j)|^n\right)^{1/n}$, the condition to be checked is
\[\|D_\beta\|_{\Pi_{(E^{1/n},p)}(F;G)}\leq\left(\sum_{k=1}^N\|D_{\alpha_k}\|_{\Pi_{(E^{1/n},p)}(F;G)}^n\right)^{1/n}.\]
Now, let $x_1,\dots,x_m\in F$. Since $G$ is $n$-convex with $\mathbf M^{(n)}(G)=1$  we obtain
\begin{align*}
  \bigg\|\Big(\|D_\beta(x_i)\|_G\Big)_{i=1}^m & \bigg\|_{E^{1/n}} =  \bigg\|\Big(\|\beta\cdot x_i\|_G\Big)_{i=1}^m\bigg\|_{E^{1/n}} = \bigg\|\Big(\|\beta\cdot x_i\|_G^n\Big)_{i=1}^m\bigg\|_E^{1/n}\\
  = & \left\|\left(\bigg\|\Big(\sum_{k=1}^N |\alpha_k\cdot x_i|^n\Big)^{1/n}\bigg\|_G^n\right)_{i=1}^m\right\|_E^{1/n}\leq \left\|\bigg(\sum_{k=1}^N \|\alpha_k\cdot x_i\|_G^n\bigg)_{i=1}^m\right\|_E^{1/n}\\
 (\textrm{by Minkowski}) \ \ \leq & \left(\sum_{k=1}^N\bigg\|\Big(\|D_{\alpha_k}(x_i)\|_G^n\Big)_{i=1}^m\bigg\|_E\right)^{1/n}
 =  \left(\sum_{k=1}^N\bigg\|\Big(\|D_{\alpha_k}(x_i)\|_G\Big)_{i=1}^m\bigg\|_{E^{1/n}}^n\right)^{1/n}\\
 \leq &  \left(\sum_{k=1}^N \Big( \|D_{\alpha_k}\|_{\Pi_{(E^{1/n},p)}}\cdot w_p(x_i) \Big)^n\right)^{\frac 1 n}
 =  \left(\sum_{k=1}^N \|D_{\alpha_k}\|_{\Pi_{(E^{1/n},p)}}^n\right)^{\frac 1 n}\cdot w_p(x).
 \end{align*}
Then,
$\|D_\beta\|_{\Pi_{(E^{1/n},p)}(F;G)}\leq\left(\sum_{k=1}^N\|D_{\alpha_k}\|_{\Pi_{(E^{1/n},p)}(F;G)}^n\right)^{1/n}$.
\end{proof}

Note that under the assumptions of  item (i) of lemma above  the class $\Pi_{\left(\en(\B;\ell_r,E), q\right)}$ for  $n$-linear operators is well defined.

\begin{remark}\label{norma producto2}
In Remark \ref{norma producto} we showed that whenever $\|x_1\|_E=\cdots=\|x_n\|_E=1$, then $\|(x_1\cdots x_n)^{1/n}\|_E^n\leq1$. Thus, for every $x_1,\dots, x_n\in E$ we have $\|(x_1\cdots x_n)^{1/n}\|_E^n\leq\|x_1\|_E\cdots\|x_n\|_E$. Hence, if the space $E$ is $n$-convex, $\|x_1\cdots x_n\|_{E^{n}}\leq\|x_1\|_E\cdots\|x_n\|_E$. In particular, it holds that
$\|x_1\cdots x_n\|_E\leq\|x_1\|_{E^{1/n}}\cdots\|x_n\|_{E^{1/n}}$,
for all $x_1,\dots,x_n\in E^{1/n}$.
\end{remark}

Now we present in the next theorem two composition results about $(E,p)$-summing $n$-linear mappings.  Since the statement involve both linear and $n$-linear ideals to avoid confusion we chose to denote by $\Pi_{(E,p)}^{(n)}$ the  ideal of $(E,p)$-summing $n$-linear operators. Observe that for the particular case of $E=\ell_{\frac p n}$ both compositions are known results about $p$-dominated $n$-linear operators \cite{MT99}.

\begin{theorem}[Composition theorem for $(E,p)$-summing multilinear mappings]\label{teo de composicion (E,p)}

\noindent Let $E$ be a K\"{o}the sequence space  such that  $\ep\hookrightarrow E^{1/n}$.
  \begin{enumerate}
 \item[(i)] If $q$  and $r$  are such that $p<q$ and $\frac 1 r=\frac 1 p-\frac 1 q$, then
\[\Pi_{\left(\en(\B;\ell_r,E), q\right)}^{(n)}\circ(\Pi_r ,\dots,\Pi_r)\subseteq\Pi_{(E, p)}^{(n)}.\]
Moreover, $\pi_{(E,p)}^{(n)}\left(T\circ(A_1,\dots,A_n)\right)\leq \pi_{(\en(\B;\ell_r,E),q)}^{(n)}(T)\cdot\pi_r(A_1)\cdots\pi_r(A_n)$, for $T$ belonging to $\Pi_{(\en(\B;\ell_r,E),q)}^{(n)}$ and $A_1, \dots, A_n$ belonging to $\Pi_r$.
\item[(ii)] It holds
$$\mathcal L^{(n)}\circ(\Pi_{(E^{1/n},p)} ,\dots,\Pi_{(E^{1/n},p)})\subseteq\Pi_{(E, p)}^{(n)}.$$
Moreover,
$\pi_{(E,p)}^{(n)}(T\circ(A_1,\dots,A_n))\leq\|T\|\cdot \pi_{(E^{1/n},p)}(A_1)\cdots\pi_{(E^{1/n},p)}(A_n)$ for $T$ an $n$-linear operator and $A_1, \dots, A_n$ belonging to $\Pi_{(E^{1/n},p)}$.
\end{enumerate}
\end{theorem}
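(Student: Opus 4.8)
The plan is to prove both items by reducing to the corresponding scalar-valued (diagonal) statements via the sequence space machinery of Section~2, or alternatively by a direct domination-inequality manipulation. I will sketch the direct route, which is cleaner for a composition theorem of this type.

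For item (ii), suppose $A_j\in\Pi_{(E^{1/n},p)}(X_j;Z_j)$ and $T\in\mathcal L^{(n)}(Z_1,\dots,Z_n;Y)$. Take finite sequences $(x_{j,i})_{i=1}^m\subseteq X_j$ for $j=1,\dots,n$. The key estimate is
\[
\big\|\big(\|T(A_1x_{1,i},\dots,A_nx_{n,i})\|_Y\big)_i\big\|_{E}
\le \|T\|\cdot\big\|\big(\|A_1x_{1,i}\|_{Z_1}\cdots\|A_nx_{n,i}\|_{Z_n}\big)_i\big\|_{E}.
\]
Now I apply Remark~\ref{norma producto2}: writing $u_j=(\|A_jx_{j,i}\|_{Z_j})_i\in E^{1/n}$ (each belongs to $E^{1/n}$ because $A_j$ is $(E^{1/n},p)$-summing), we get $\|u_1\cdots u_n\|_E\le\|u_1\|_{E^{1/n}}\cdots\|u_n\|_{E^{1/n}}$. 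By the defining inequality~\eqref{(E,p)-dominado} for each $A_j$ (with $E^{1/n}$ in place of $E$), $\|u_j\|_{E^{1/n}}\le \pi_{(E^{1/n},p)}(A_j)\cdot c_p^{(E^{1/n})^{1/n}}\cdot w_p(x_{j,1},\dots,x_{j,m})$. Here I must check that the convexity constant $c_p^{E^{1/n}}$ appearing in the definition of $\Pi_{(E,p)}^{(n)}$ matches what is produced: since $(E^{1/n})^{1/n}$ is the relevant convexification for the ambient $E$, one needs $c_p^{E^{1/n}}=c_p^{(E^{1/n})^{1/n}}$ up to the bookkeeping of how the norm constant is normalized in~\eqref{(E,p)-dominado}; this is the one place demanding care. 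Multiplying through yields the bound $\|T\|\cdot\prod_j\pi_{(E^{1/n},p)}(A_j)\cdot c_p^{E^{1/n}}\cdot\prod_j w_p(x_j)$, which is exactly~\eqref{(E,p)-dominado} for $T\circ(A_1,\dots,A_n)$ with constant $\|T\|\cdot\prod_j\pi_{(E^{1/n},p)}(A_j)$.

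For item (i), with $p<q$ and $\tfrac1r=\tfrac1p-\tfrac1q$, suppose $A_j\in\Pi_r(X_j;Z_j)$ and $T\in\Pi_{(\en(\B;\ell_r,E),q)}^{(n)}(Z_1,\dots,Z_n;Y)$. The strategy is the same factorization-of-the-inequality idea, but now I route the middle term through the sequence space $\en(\B;\ell_r,E)$, whose $1/n$-convexification contains $\ell_q$ by part~(i) of the preceding lemma, so the ideal $\Pi_{(\en(\B;\ell_r,E),q)}^{(n)}$ is well defined. Applying the $(\en(\B;\ell_r,E),q)$-domination of $T$ I get
\[
\big\|\big(\|T(A_1x_{1,i},\dots)\|_Y\big)_i\big\|_{E}
\]
controlled, after a multiplier argument identifying $\en(\B;\ell_r,E)$ with $\M((\cdots),\cdot)$, by $\pi^{(n)}_{(\en(\B;\ell_r,E),q)}(T)$ times a product of weak-$\ell_q$ norms of the $(A_jx_{j,i})_i$. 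Then I use the classical fact that an $r$-summing operator sends weak-$\ell_q$ sequences to weak-$\ell_p$ sequences with norm controlled by $\pi_r(A_j)$ — this is exactly the ``$w_p(A_jx_j)\le\pi_r(A_j)\,w_q(x_j)$'' inequality underlying the linear case in \cite{DMM02} — to convert each factor into $\pi_r(A_j)\cdot w_q(x_j)$. Wait: more precisely it is the other direction — one factors $A_j$ through an $\ell_r$-sum and uses Hölder $\tfrac1p=\tfrac1r+\tfrac1q$ at the level of the $E$-norm of the product sequence, exploiting $n$-convexity. Assembling the pieces gives the stated inequality $\pi_{(E,p)}^{(n)}(T\circ(A_1,\dots,A_n))\le\pi^{(n)}_{(\en(\B;\ell_r,E),q)}(T)\cdot\prod_j\pi_r(A_j)$.

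The main obstacle is item~(i): unlike~(ii), the middle ideal is not $\mathcal L^{(n)}$ but a genuine $(\cdot,q)$-summing class whose ``target index'' is itself a Köthe sequence space, so the bookkeeping of the Hölder splitting $\tfrac1p=\tfrac1r+\tfrac1q$ must be done simultaneously at the vector level (composing with $A_j$) and at the sequence-space level (inside the $E$-norm), and one must be careful that the convexity constants $\mathbf M^{(n)}$ stay equal to $1$ throughout so no stray constants appear. Item~(ii) is essentially a one-line consequence of Remark~\ref{norma producto2} together with the ideal property, and I expect its proof to be short. In both items I would, if a direct manipulation gets unwieldy, fall back on Lemma~\ref{eq (E,p)} to reduce to compositions with operators $A_j\in\mathcal L(\ell_{p'}^m;X_j)$ of norm $\le1$, which often trivializes the weak-norm terms.
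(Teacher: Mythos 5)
Your treatment of item (ii) is correct and is essentially the paper's own proof: bound $\|T(A_1x_{1,i},\dots,A_nx_{n,i})\|_Y$ by $\|T\|\prod_j\|A_jx_{j,i}\|$, use the normal property of $E$ together with Remark~\ref{norma producto2} to split the $E$-norm of the product sequence into a product of $E^{1/n}$-norms, and apply the $(E^{1/n},p)$-summing inequality to each $A_j$. One small slip: for a \emph{linear} operator the constant appearing in the defining inequality is $c_p^{(E^{1/n})^{1/1}}=c_p^{E^{1/n}}$, not $c_p^{(E^{1/n})^{1/n}}$; the residual bookkeeping of the powers of $c_p^{E^{1/n}}$ that you rightly flag is glossed over in the paper as well, so I will not press it.

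Item (i) is where your proposal has a genuine gap (the paper, for its part, does not prove it either: it only remarks that it is the $n$-linear analogue of \cite[Lemma 3.5]{DMM02} and omits the argument). Your sketch pivots on the inequality $w_p\big((A_jx_{j,i})_i\big)\leq \pi_r(A_j)\, w_q\big((x_{j,i})_i\big)$, which is false: summing operators convert \emph{weak} summability into \emph{strong} summability, they do not improve weak indices (already for a rank-one operator $\phi\otimes y$ and the constant sequence $x_i=x$, $i=1,\dots,m$, the left side grows like $m^{1/p}$ while the right side grows like $m^{1/q}$ with $p<q$). You then retract this and gesture at "the other direction," but no argument materializes. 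The missing mechanism is the Pietsch-domination/H\"older splitting that underlies the classical composition theorem $\Pi_q\circ\Pi_r\subseteq\Pi_p$ and the proof of \cite[Lemma 3.5]{DMM02}: using a Pietsch measure for each $A_j$ and H\"older with $\frac1p=\frac1q+\frac1r$, one writes $A_j(x_{j,i})=\sigma_{j,i}\,z_{j,i}$ with the scalar factors $(\sigma_{j,i})_i$ controlled in $\ell_r$ by $\pi_r(A_j)$ and $w_p(x_j)$, and the vector factors $(z_{j,i})_i$ weakly $q$-summable. Then
\[
\Big\|\big(\|T(A_1x_{1,i},\dots,A_nx_{n,i})\|_Y\big)_{i=1}^m\Big\|_E
=\Big\|(\sigma_{1,i}\cdots\sigma_{n,i})_{i=1}^m\cdot\big(\|T(z_{1,i},\dots,z_{n,i})\|_Y\big)_{i=1}^m\Big\|_E ,
\]
and it is exactly the definition of $\en(\B;\ell_r,E)$ as the diagonal space $\{\alpha : \alpha\cdot\mu_1\cdots\mu_n\in E \text{ for all }\mu_j\in\ell_r\}$ that lets you peel off the $n$ scalar $\ell_r$-factors and leave the $\en(\B;\ell_r,E)$-norm of $\big(\|T(z_{1,i},\dots,z_{n,i})\|\big)_i$, which the $(\en(\B;\ell_r,E),q)$-summing hypothesis on $T$ controls by $\pi^{(n)}_{(\en(\B;\ell_r,E),q)}(T)\prod_j w_q(z_j)$. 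Without this splitting your "multiplier argument" has nothing to multiply against, so the sketch of (i) as written does not close.
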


\begin{proof}
(i) This is an $n$-linear version of \cite[Lemma 3.5]{DMM02}. The proof is similar so we omit it.

\noindent (ii) Let $x_1=(x_{1,i})_{i=1}^m\subseteq X_1$,\dots, $x_n=(x_{n,i})_{i=1}^m\subseteq X_n$. By the normal property of $E$  and by  Remark \ref{norma producto2}, we have
  \begin{align*}
    \bigg\|\Big(\big\|T(A_1(x_{1,i}),\dots,A_n(x_{n,i}))&\big\|_Y\Big)_{i=1}^m\bigg\|_E  \leq \bigg\|\Big(\|T\|\cdot\|A_1(x_{1,i})\|_{Y_1}\cdots\|A_n(x_{n,i})\|_{Y_n}\Big)_{i=1}^m\bigg\|_E \\
     & = \|T\|\cdot\bigg\|\Big(\|A_1(x_{1,i})\|_{Y_1}\cdots\|A_n(x_{n,i})\|_{Y_n}\Big)_{i=1}^m\bigg\|_E \\
     & \leq \|T\|\cdot\bigg\|\Big(\|A_1(x_{1,i})\|_{Y_1}\Big)_{i1}^m\bigg\|_{E^{1/n}}\cdots\bigg\|\Big(\|A_n(x_{n,i})\|_{Y_n}\Big)_{i=1}^m\bigg\|_{E^{1/n}}\\
     &\leq \|T\|\cdot \pi_{(E^{1/n},p)}(A_1)\cdot w_p(x_1)\cdots\pi_{(E^{1/n},p)}(A_n)\cdot w_p(x_n)
  \end{align*}
\end{proof}

The next proposition shows that sequence space associated to $(E,p)$-summing $n$-linear operators can be seen as the $n$-convexification of a sequence space associated to $(E^{1/n},p)$-summing linear mappings. This identification extend an analogous result for scalar-valued $p$-dominated $n$-linear mappings proved in \cite{CDS06} (see explanation below).

\begin{proposition}
  Let $E$, $F$ and $G$ be K\"{o}the sequence spaces such that $\ep\hookrightarrow E^{1/n}$. Then $$\en\big(\Pi_{(E,p)};F,G\big)\overset 1 = \big[\boldsymbol\ell_1\left(\Pi_{(E^{1/n},p)};F,G^{1/n}\right)\big]^n.$$
\end{proposition}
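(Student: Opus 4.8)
The plan is to unwind both sides to statements about diagonal operators and then reduce everything to a pointwise identity via $n$-convexification. First I would note that, by definition, a sequence $\alpha$ belongs to $\en(\Pi_{(E,p)};F,G)$ precisely when the diagonal $n$-linear operator $T_\alpha\colon F\times\cdots\times F\to G$ is $(E,p)$-summing, with norm $\pi_{(E,p)}(T_\alpha)$. On the other side, $\alpha\in[\boldsymbol\ell_1(\Pi_{(E^{1/n},p)};F,G^{1/n})]^n$ means that $\alpha^{1/n}\in\boldsymbol\ell_1(\Pi_{(E^{1/n},p)};F,G^{1/n})$, i.e.\ the diagonal linear operator $D_{\alpha^{1/n}}\colon F\to G^{1/n}$ is $(E^{1/n},p)$-summing, and the norm equality to be proved is $\pi_{(E,p)}(T_\alpha)=\pi_{(E^{1/n},p)}(D_{\alpha^{1/n}})^n$. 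So the whole statement is the assertion: $T_\alpha$ is $(E,p)$-summing iff $D_{\alpha^{1/n}}$ is $(E^{1/n},p)$-summing, with this precise relation between the constants.

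Next I would write out the two summability inequalities and compare them. Fix finite families $x_1=(x_{1,i})_{i=1}^m,\dots,x_n=(x_{n,i})_{i=1}^m\subseteq F$. For $T_\alpha$ the left-hand side of the $(E,p)$-condition is $\big\|(\|\alpha\cdot x_{1,i}\cdots x_{n,i}\|_G)_{i=1}^m\big\|_E$, and the right-hand side is $C\cdot c_p^{E^{1/n}}\cdot w_p(x_1)\cdots w_p(x_n)$. For the linear operator $D_{\alpha^{1/n}}$ on a single family $y=(y_i)_{i=1}^m\subseteq F$, the condition reads $\big\|(\|\alpha^{1/n}\cdot y_i\|_{G^{1/n}})_{i=1}^m\big\|_{E^{1/n}}\le C'\cdot c_p^{E^{1/n}}\cdot w_p(y)$; raising to the $n$-th power and using $\|z\|_{G^{1/n}}^{?}$-bookkeeping, $\|\alpha^{1/n}\cdot y_i\|_{G^{1/n}}^n=\|\alpha\cdot y_i^n\|_G$ (by the very definition of the $n$-convexification norm, since $|\alpha^{1/n}y_i|^n=|\alpha||y_i|^n$), so the squared inequality becomes $\big\|(\|\alpha\cdot y_i^n\|_G)_{i=1}^m\big\|_E\le (C')^n(c_p^{E^{1/n}})^n w_p(y)^n$. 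The direction ``$\alpha^{1/n}$ $(E^{1/n},p)$-summing $\Rightarrow$ $\alpha$ $(E,p)$-summing'' is then exactly item (ii) of Theorem~\ref{teo de composicion (E,p)} applied with $T=D_{\alpha}$ seen as an $n$-linear diagonal operator factoring appropriately — more directly, one feeds $x_{1,i}\cdots x_{n,i}$ into the single-family estimate after a polarization/symmetrization; the cleanest route is to observe $T_\alpha(x_{1,i},\dots,x_{n,i})=D_{\alpha^{1/n}}(x_{1,i})^{?}$ does not literally hold, so instead I would invoke the factorization $T_\alpha = \widehat{M}\circ(D_{\alpha^{1/n}},\dots,D_{\alpha^{1/n}})$ where $\widehat M\colon G^{1/n}\times\cdots\times G^{1/n}\to G$ is the (bounded, norm $\le 1$) $n$-linear multiplication map $(z_1,\dots,z_n)\mapsto z_1\cdots z_n$ — boundedness of $\widehat M$ is Remark~\ref{norma producto2}. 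Then Theorem~\ref{teo de composicion (E,p)}(ii) gives $\pi_{(E,p)}(T_\alpha)\le \|\widehat M\|\cdot\pi_{(E^{1/n},p)}(D_{\alpha^{1/n}})^n\le \pi_{(E^{1/n},p)}(D_{\alpha^{1/n}})^n$.

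For the reverse inequality I would go back to the definition and test $T_\alpha$ on the \emph{diagonal} family $x_{1,i}=\cdots=x_{n,i}=y_i$: this yields $\big\|(\|\alpha\cdot y_i^n\|_G)_{i=1}^m\big\|_E\le \pi_{(E,p)}(T_\alpha)\cdot c_p^{E^{1/n}}\cdot w_p(y)^n$, which after taking $n$-th roots and rewriting the left side as $\big\|(\|\alpha^{1/n}\cdot y_i\|_{G^{1/n}})_{i=1}^m\big\|_{E^{1/n}}$ (again the identity $\|\alpha\cdot y_i^n\|_G=\|\alpha^{1/n}y_i\|_{G^{1/n}}^n$ and the fact that $\|(t_i)_i\|_E^{1/n}=\|(t_i^{1/n})_i\|_{E^{1/n}}$ for nonnegative scalars) is precisely the $(E^{1/n},p)$-summing estimate for $D_{\alpha^{1/n}}$ with constant $\pi_{(E,p)}(T_\alpha)^{1/n}$. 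Hence $\pi_{(E^{1/n},p)}(D_{\alpha^{1/n}})\le\pi_{(E,p)}(T_\alpha)^{1/n}$, i.e.\ $\pi_{(E^{1/n},p)}(D_{\alpha^{1/n}})^n\le\pi_{(E,p)}(T_\alpha)$, giving the matching bound and therefore the isometric identity. The main obstacle is purely bookkeeping: one must be careful that $E^{1/n}$ is well defined (it always is, as recalled before the definition of $(E,p)$-summing), that the norm identities $\|\alpha^{1/n}y\|_{G^{1/n}}^n=\|\alpha y^n\|_G$ and $\|(t_i)\|_E^{1/n}=\|(t_i^{1/n})\|_{E^{1/n}}$ are applied with the absolute values the normality of the spaces permits, and that the constant $c_p^{E^{1/n}}$ appearing in the definition of $(E,p)$-summing is \emph{the same} $c_p^{E^{1/n}}$ used in the definition of $(E^{1/n},p)$-summing — which it is, so it cancels cleanly and does not contaminate the isometry.
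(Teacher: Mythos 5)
Your proposal is correct and follows essentially the same route as the paper: one inequality comes from testing the $(E,p)$-summing condition for $T_\alpha$ on the diagonal family $x_{1,i}=\cdots=x_{n,i}=y_i$ and converting norms via $\|\alpha^{1/n}\cdot y\|_{G^{1/n}}^{n}=\|\alpha\cdot y^{n}\|_{G}$, and the other from the factorization $T_\alpha=\Psi\circ(D_{\alpha^{1/n}},\dots,D_{\alpha^{1/n}})$ through the multiplication map combined with Theorem~\ref{teo de composicion (E,p)}(ii). This is exactly the paper's argument, down to the same two key ingredients.
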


\begin{proof}
  Note first that $\alpha\in\big[\boldsymbol\ell_1\left(\Pi_{(E^{1/n},p)};F,G^{1/n}\right)\big]^n$ if and only if the diagonal linear operator $D_{\alpha^{1/n}}\in\Pi_{(E^{1/n},p)}(F;G^{1/n})$. Let $\alpha\in\en\big(\Pi_{(E,p)};F,G\big)$ and take $x_1,\dots,x_m\in F$, then
  \begin{align*}
    \Bigg\|\bigg(\big\|&D_{\alpha^{1/n}}(x_i)\big\|_{G^{1/n}}  \bigg)_{i=1}^m\Bigg\|_{E^{1/n}} = \left\|\bigg(\big\|(\alpha^{1/n}\cdot x_i)^n\big\|_G^{1/n}\bigg)_{i=1}^m\right\|_{E^{1/n}}\\
      & = \left\|\bigg(\big\|T_\alpha(x_i,\dots,x_i)\big\|_G^{1/n}\bigg)_{i=1}^m\right\|_{E^{1/n}}
    =  \left\|\bigg(\big\|T_\alpha(x_i,\dots,x_i)\big\|_G\bigg)_{i=1}^m\right\|_E^{1/n} \\
    & \leq \left(\|T_\alpha\|_{\Pi_{(E,p)}}\cdot w_p(x)^n\right)^{1/n}=\|\alpha\|_{\en\big(\Pi_{(E,p)};F,G\big)}^{1/n}\cdot w_p(x).
  \end{align*}
  Then, $\alpha\in\big[\boldsymbol\ell_1\left(\Pi_{(E^{1/n},p)};F,G^{1/n}\right)\big]^n$ and $$\|\alpha\|_{\big[\boldsymbol\ell_1\left(\Pi_{(E^{1/n},p)};F,G^{1/n}\right)\big]^n}
=\|D_{\alpha^{1/n}}\|_{\Pi_{(E^{1/n},p)}(F;G^{1/n})}^n  \leq \|\alpha\|_{\en\big(\Pi_{(E,p)};F,G\big)}.$$

Conversely, let $\alpha\in\big[\boldsymbol\ell_1\left(\Pi_{(E^{1/n},p)};F,G^{1/n}\right)\big]^n$. Consider the factorization of $T_\alpha=\Psi\circ(D_{\alpha^{1/n}},\dots,D_{\alpha^{1/n}})$, where the operator  $\Psi\in\B(^nG^{1/n};G)$ is given by $\Psi(x_1,\dots,x_n)=x_1\cdots x_n$, and $D_{\alpha^{1/n}}\in\Pi_{(E^{1/n},p)}(F;G^{1/n})$. Applying Theorem \ref{teo de composicion (E,p)} (2), we obtain that $T_\alpha\in\Pi_{(E,p)}(^nF;G)$ and $\pi_{(E,p)}(T_\alpha)\leq\|\Psi\|\cdot \big(\pi_{(E^{1/n},p)}(D_{\alpha^{1/n}})\big)^n\leq \|\alpha\|_{\big[\boldsymbol\ell_1\left(\Pi_{(E^{1/n},p)};F,G^{1/n}\right)\big]^n}$.
\end{proof}

Some comments are in order. As we have mentioned, if $p\ge n$ and $E=\ell_{\frac p n}$ then for $n$-linear mappings $\Pi_{(E,p)}$ coincides with $\mathcal D_p$ (the ideal of $p$-dominated mappings). For this particular case, the identity of the previous proposition reads as follows:
$$
\en\big(\mathcal D_p;F,G\big)\overset 1 = \big[\boldsymbol\ell_1\left(\Pi_p;F,G^{1/n}\right)\big]^n.
$$
This can be seen as the vector-valued version of \cite[Prop. 2.1]{CDS06} which, translated to our current terminology says (for $n\ge 2$):
$$
\en\big(\mathcal D_p;F\big)\overset 1 = \big[\boldsymbol\ell_1\left(\Pi_p;F,\ell_n\right)\big]^n.
$$
Actually, that result was just for $F$ an $\ell_p$ space, but the same argument works for any K\"{o}the sequence space. Moreover, it can also be proved, following analogous arguments that (when $\ep\hookrightarrow E^{1/n}$ and $n\ge 2$)
$$\en\big(\Pi_{(E,p)};F\big)\overset 1 = \big[\boldsymbol\ell_1\left(\Pi_{(E^{1/n},p)};F,\ell_n\right)\big]^n.$$

As an interesting consequence of these identities we derive, for every $n\ge 2$,
$$\en\big(\Pi_{(E,p)};F,\ell_1\big)\overset 1 = \en\big(\Pi_{(E,p)};F\big).$$
Note that clearly this equality holds also for the ideal $\mathcal L$: for  $n\ge 2$,
$\en\big(\mathcal L;F,\ell_1\big)\overset 1 = \en\big(\mathcal L;F\big).$
However it is not true for any ideal $\mathfrak A$. For instance, for the ideal $\mathcal I$ of integral multilinear mappings we know, from identity \eqref{ln adjunto maximales} that
$\en\big(\mathcal I; E, \ell_1\big)\overset 1 = \en \big(\mathcal L;E^{\times},\ell_\infty\big)^{\times} \overset 1 =\ell_1$, for any K\"{o}the sequence space $E$. But $\en\big(\mathcal I; E\big)$ is not always equal to $\ell_1$. Indeed, by \cite[Prop. 1.2]{CDS06} (see also \cite{CDSV14}), $\en\big(\mathcal I; \ell_p\big)\overset 1 =\ell_{\frac{p'}n}$, for $1\le p\le\frac{n}{n-1}$.

Finally, we extend to the multilinear setting an Inclusion theorem for $(E,p)$-summing operators proved in \cite[Lemma 3.4]{DMM02}. The proof is similar, so we omit it. For $E=\ell_{\frac p n}$, the first inclusion is just the usual inclusion of $p$-dominated  into $q$-dominated $n$-linear operators when $p<q$. Other inclusion results about $(q,p)$-summing multilinear mappings with and without hypothesis about cotype 2 spaces in the domain can be found in \cite{BMP10}.

\begin{theorem}[Inclusion theorem for $(E,p)$-summing multilinear operators]\label{teo de inclusion (E,p)}
\

\noindent Let $E$ be a K\"{o}the sequence space such that  $\ep\hookrightarrow E^{1/n}$. If $q$  and $r$  are such that $p<q$ and $\frac 1 r=\frac 1 p-\frac 1 q$, then we have the following inclusion for ideals of $n$-linear mappings:
\[\Pi_{(E, p)}\subseteq\Pi_{\left(\en(\B;\ell_r,E), q\right)},\]
with $\pi_{(\en(\B;\ell_r,E),q)}(T)\leq c_p^{E^{1/n}}\cdot \left( c_q^{\en(\B;\ell_r,E)}\right)^{-1}\cdot \pi_{(E,p)}(T)$, for all $T\in\Pi_{(E,p)}^{(n)}$.

Moreover, if $X_1,\dots,X_n$ have cotype 2, then for any Banach space $Y$,
\begin{eqnarray}\label{identidad 1,2-dominado}
  \Pi_{\left(\en(\B;\ell_{2},E),2\right)}(X_1,\dots,X_n;Y)=\Pi_{(E,1)}(X_1,\dots,X_n;Y).
\end{eqnarray}
\end{theorem}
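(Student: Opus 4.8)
The plan is to follow the blueprint of the linear Inclusion theorem \cite[Lemma 3.4]{DMM02} and adapt it to the $n$-linear setting, relying on the Composition theorem just proved (Theorem \ref{teo de composicion (E,p)}).

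\medskip

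For the first inclusion, suppose $T\in\Pi_{(E,p)}^{(n)}(X_1,\dots,X_n;Y)$ and take finite sequences $x_1=(x_{1,i})_{i=1}^m\subseteq X_1,\dots,x_n=(x_{n,i})_{i=1}^m\subseteq X_n$. The key observation, exactly as in the linear case, is that by H\"older's inequality a weak $\ell_q$ sequence can be written as the coordinatewise product of a weak $\ell_p$ sequence and a scalar sequence in $\ell_r$ with $\frac1r=\frac1p-\frac1q$; concretely one normalizes $x_{j,i}$ by scalars $\lambda_j(i)\ge 0$ with $(\lambda_j(i))_i\in B_{\ell_r}$ so that the rescaled sequence has controlled weak $\ell_p$-norm. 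Applying the $(E,p)$-summing estimate \eqref{(E,p)-dominado} to the rescaled sequence and then using the definition of $\boldsymbol{\ell_n}(\B;\ell_r,E)$ (i.e.\ that multiplication by the diagonal sequence maps $\ell_r\times\cdots\times\ell_r$ into $E$) one recovers the desired bound of the form $\big\|(\|T(x_{1,i},\dots,x_{n,i})\|_Y)_i\big\|_{\boldsymbol{\ell_n}(\B;\ell_r,E)}\le C\, c_q^{\boldsymbol{\ell_n}(\B;\ell_r,E)}\, w_q(x_1)\cdots w_q(x_n)$. Tracking the normalizing constants $c_p^{E^{1/n}}$ and $c_q^{\boldsymbol{\ell_n}(\B;\ell_r,E)}$ through this argument yields the stated norm inequality; item (i) of the lemma preceding Theorem \ref{teo de composicion (E,p)} guarantees that $\ell_q\hookrightarrow[\boldsymbol{\ell_n}(\B;\ell_r,E)]^{1/n}$, so the target ideal $\Pi_{(\boldsymbol{\ell_n}(\B;\ell_r,E),q)}$ is well defined. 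Alternatively, the same inclusion follows formally from the containment $\Pi_r\supseteq \mathcal L$ on the coordinate spaces together with Theorem \ref{teo de composicion (E,p)}(i), since the identity $\mathbb C^m\to\mathbb C^m$ viewed from $\ell_{p'}^m$ factors appropriately; but the direct H\"older computation is cleaner for bookkeeping.

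\medskip

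For the cotype $2$ statement \eqref{identidad 1,2-dominado}, one inclusion is the case $p=1$, $q=2$, $r=2$ of the inclusion just established, which gives $\Pi_{(E,1)}\subseteq\Pi_{(\boldsymbol{\ell_n}(\B;\ell_2,E),2)}$ with no hypothesis on the $X_j$. The reverse inclusion is where the cotype $2$ assumption enters: one uses the classical fact that for a cotype $2$ space $X$ every $2$-summing operator into $X$ (or rather, the relevant weak-to-strong comparison) allows one to replace a weak $\ell_2$ control by weak $\ell_1$ control, up to a constant depending only on the cotype $2$ constant of $X$. In the $n$-linear situation one applies this coordinate by coordinate to each $X_j$ — factoring through the Maurey--Pisier/Kwapie\'n machinery that identifies $\Pi_2(\ell_\infty^m;X_j)$-type norms with $\ell_1$-summing norms when $X_j$ has cotype $2$ — and then feeds the resulting reduction back through Lemma \ref{eq (E,p)} to pass from finitely many indices to the general estimate. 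This is the main obstacle: carrying the cotype $2$ reduction through $n$ separate factors simultaneously while keeping the product structure of the weak norms intact, and making sure the constants multiply correctly. Once that reduction is in place, the equality of the two ideals (as sets, with equivalence of norms) follows.
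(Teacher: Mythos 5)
Your outline follows exactly the route the paper intends: the paper in fact omits this proof entirely, saying only that it is ``similar'' to the linear case \cite[Lemma 3.4]{DMM02}, and your H\"older-splitting of weak-$\ell_q$ sequences into weak-$\ell_p$ sequences times $B_{\ell_r}$-multipliers (with the supremum over the $n$ multiplier sequences recovering precisely the $\en(\B;\ell_r,E)$-norm), together with the reduction of the cotype-$2$ equality via Lemma \ref{eq (E,p)} with $p'=\infty$, the fact that bounded operators from $\ell_\infty^m$ into cotype-$2$ spaces are $2$-summing, and Theorem \ref{teo de composicion (E,p)}(i) with $p=1$, $q=r=2$, is the correct adaptation. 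One aside should be dropped: the claimed containment $\Pi_r\supseteq\mathcal L$ is false (the relevant $\pi_r$-norms of operators on $\ell_{p'}^m$ are not uniformly bounded in $m$) and Theorem \ref{teo de composicion (E,p)}(i) estimates $\pi_{(E,p)}$ by $\pi_{(\en(\B;\ell_r,E),q)}$, i.e.\ in the wrong direction for the first inclusion --- but since you rely on the direct computation, this does not affect the proof.
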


In the same spirit of the definition of $(E,p)$-summing multilinear operators and having in mind the concept of strongly $p$-summing multilinear operators \cite{Dim03}, we introduce the class of strongly $(E,p)$-summing multilinear operators.
Let $E$ be a K\"{o}the sequence space such that $\ep\hookrightarrow E$. An $n$-linear operator $T\in\B(X_1,\dots,X_n;Y)$ is said to be \emph{strongly $(E,p)$-summing} if exists $C>0$ such that for finite sequences $(x_{1,i})_{i=1}^m\subseteq X_1$, \dots,$(x_{n,i})_{i=1}^m\subseteq X_n$,  it satisfies that
\begin{eqnarray}\label{fuertemente (E,p)-sumante}
 \ \ \ \ \ \ \left\|\left(\left\|T(x_{1,i},\dots,x_{n,i})\right\|_Y\right)_{i=1}^m\right\|_{E}\leq C\cdot c_p^E\cdot \sup_{\phi\in B_{\B(X_1,\dots,X_n)}} \left(\sum_{i=1}^m |\phi(x_{1,i},\dots,x_{n,i})|^p\right)^{1/p}.
\end{eqnarray}
\noindent We note $\s_{(E,p)}$ the space of strongly $(E,p)$-summing multilinear operators. It is easy to see that it is an ideal of $n$-linear operators endowed with the norm
\[S_{(E,p)}(T)=\inf\{C \ : \ T \textrm{\ verifies \ } \eqref{fuertemente (E,p)-sumante}\}.\]
Applying the same arguments used in \cite{DMM02} to prove the inclusion theorem for $(E,p)$-summing linear operators, it can be proved an analogous inclusion theorem for strongly $(E,p)$-summing multilinear operators.
\begin{theorem}
  Let $E$ be a K\"{o}the sequence space such that $\ep\hookrightarrow E$. If $\frac n r=\frac 1 p-\frac 1 q$, then
\[\s_{(E, p)}\subseteq\s_{\left(\en(\B;\ell_r,E), q\right)}.\]
Moreover, if $T\in\s_{(E,p)}$, then $S_{(\en(\B;\ell_r,E),q)}(T)\leq c_p^E\cdot \left( c_q^{\en(\B;\ell_r,E)}\right)^{-1}\cdot S_{(E,p)}(T)$.
\end{theorem}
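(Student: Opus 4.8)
The plan is to follow verbatim the argument of the inclusion theorem for $(E,p)$-summing linear operators \cite[Lemma 3.4]{DMM02}, transplanting it to the multilinear setting via the multiplier description of $\en(\B;\ell_r,E)$ together with a Hölder estimate. First I would set $G:=\en(\B;\ell_r,E)$ and observe that the hypothesis $\tfrac{n}{r}=\tfrac1p-\tfrac1q$ forces $r>n$, since $\tfrac1p-\tfrac1q<\tfrac1p\le1$. Hence $\ell_r$ is $n$-convex with $\mathbf M^{(n)}(\ell_r)=1$, and Proposition~\ref{lnconvexo} gives $G\overset1=\M(\ell_r^{\,n},E)=\M(\ell_{r/n},E)$, using $\ell_r^{\,n}=\ell_{r/n}$. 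In particular, by the normal property of $E$, for any nonnegative sequence $t=(t_i)$,
\[
\|t\|_{G}=\sup\Big\{\,\big\|(c_i\,t_i)_i\big\|_E \ :\ c\ge 0,\ \|c\|_{\ell_{r/n}}\le1\,\Big\}.
\]
I would also record that $\ell_q\hookrightarrow G$: if $\alpha\in\ell_q$ and $x\in\ell_{r/n}$ then $\alpha\cdot x\in\ell_p\hookrightarrow E$ by Hölder (because $\tfrac1p=\tfrac1q+\tfrac nr$), so the inclusion constant $c_q^{G}$ is finite and the ideal $\s_{(G,q)}$ is well defined.

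Next I would fix $T\in\s_{(E,p)}(X_1,\dots,X_n;Y)$ and finite sequences $(x_{j,i})_{i=1}^m\subseteq X_j$, and abbreviate $t_i=\|T(x_{1,i},\dots,x_{n,i})\|_Y$ and $\phi_i=\phi(x_{1,i},\dots,x_{n,i})$. Given scalars $c_i\ge0$ with $\|c\|_{\ell_{r/n}}\le1$, I would apply the defining inequality \eqref{fuertemente (E,p)-sumante} of $\s_{(E,p)}$ to the rescaled tuples $\big(c_i^{1/n}x_{j,i}\big)_{i=1}^m$; since $T$ and every $\phi\in B_{\B(X_1,\dots,X_n)}$ are $n$-linear, $T(c_i^{1/n}x_{1,i},\dots,c_i^{1/n}x_{n,i})=c_i\,T(x_{1,i},\dots,x_{n,i})$ and likewise for $\phi$, so this becomes
\[
\big\|(c_i t_i)_i\big\|_E\ \le\ S_{(E,p)}(T)\cdot c_p^E\cdot \sup_{\phi\in B_{\B(X_1,\dots,X_n)}}\Big(\sum_{i=1}^m c_i^{\,p}\,|\phi_i|^{p}\Big)^{1/p}.
\]
Then I would apply Hölder's inequality to each inner sum with conjugate exponents $\tfrac qp$ and $\big(\tfrac qp\big)'=\tfrac{q}{q-p}$; using $p\big(\tfrac qp\big)'=\tfrac{pq}{q-p}=\tfrac rn$ and $\tfrac rn\cdot\tfrac{q-p}{q}=p$, this gives
\[
\sum_{i=1}^m c_i^{\,p}|\phi_i|^{p}\ \le\ \|c\|_{\ell_{r/n}}^{\,p}\Big(\sum_{i=1}^m|\phi_i|^{q}\Big)^{p/q}\ \le\ \Big(\sum_{i=1}^m|\phi_i|^{q}\Big)^{p/q},
\]
so that $\big(\sum_i c_i^{\,p}|\phi_i|^{p}\big)^{1/p}\le\big(\sum_i|\phi_i|^{q}\big)^{1/q}$ for every $\phi\in B_{\B(X_1,\dots,X_n)}$.

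Combining the three displays, for every admissible $c$ I get $\|(c_i t_i)_i\|_E\le S_{(E,p)}(T)\,c_p^E\,\sup_{\phi}\big(\sum_i|\phi_i|^{q}\big)^{1/q}$; taking the supremum over $c\ge0$ with $\|c\|_{\ell_{r/n}}\le1$ and invoking the multiplier description of $G$ yields
\[
\big\|(t_i)_i\big\|_{G}\ \le\ \Big(c_p^E\,(c_q^{G})^{-1}\,S_{(E,p)}(T)\Big)\cdot c_q^{G}\cdot \sup_{\phi\in B_{\B(X_1,\dots,X_n)}}\Big(\sum_{i=1}^m|\phi_i|^{q}\Big)^{1/q},
\]
which is exactly inequality \eqref{fuertemente (E,p)-sumante} for $\s_{(G,q)}$ with constant $c_p^E\,(c_q^{G})^{-1}\,S_{(E,p)}(T)$. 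This proves $\s_{(E,p)}\subseteq\s_{(G,q)}$ with the announced norm estimate. I do not expect any genuine obstacle here: this is the linear proof of \cite{DMM02} carried over, and the passage to $n$ linear maps enters only through a harmless scalar rescaling of the tuples. The only points requiring care are the exponent bookkeeping in the Hölder step (which is forced by $\tfrac nr=\tfrac1p-\tfrac1q$) and the inequality $r>n$, needed so that Proposition~\ref{lnconvexo} applies and $G$ is genuinely a multiplier space $\M(\ell_{r/n},E)$.
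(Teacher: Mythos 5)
Your argument is correct and is exactly the one the paper intends: the authors omit the proof, stating only that it follows by ``the same arguments'' as the inclusion theorem for $(E,p)$-summing linear operators in \cite[Lemma 3.4]{DMM02}, and your write-up carries out precisely that rescaling-plus-H\"older argument, with the identification $\en(\B;\ell_r,E)\overset{1}{=}\M(\ell_{r/n},E)$ via Proposition~\ref{lnconvexo} supplying the multiplier description and the verification that $\ell_q\hookrightarrow\en(\B;\ell_r,E)$ ensuring the target ideal is well defined. The exponent bookkeeping ($p(q/p)'=r/n$, $r>n$) checks out, so there is nothing to add.
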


\subsection*{Acknowledgements} We would like to thank Daniel Carando for helpful conversations and for suggesting some of the problems developed in the article. We also want to thank the anonymous referee for useful comments that led to a better presentation of Section 4.

\end{document}